\documentclass{article}
\usepackage[english]{babel}
\usepackage{graphicx,multicol}
\usepackage{epic,eepic,epsfig}
\usepackage{amssymb}
\usepackage{amsmath,amsthm,authblk}
\usepackage{color}
\usepackage{tikz}
\usetikzlibrary{shapes,snakes}
\usetikzlibrary{arrows.meta, positioning, fit, shapes.geometric, calc}

\usepackage{epsfig,url}
\usepackage{multirow}

\usepackage{amsfonts}
\usepackage{setspace}
\usepackage{geometry}
\usetikzlibrary{shapes,snakes}

\usepackage{eso-pic} 

  \newcommand{\noCol}[1]{}

\newcommand{\be}{\begin{enumerate}}
\newcommand{\ee}{\end{enumerate}}
\newcommand{\bd}{\begin{description}}
\newcommand{\ed}{\end{description}}

\newcommand{\beq}{\begin{equation}}
\newcommand{\eeq}{\end{equation}}

\newcommand{\2}{\vspace{2mm}}

\renewenvironment{proof}[1][]{\par \noindent {\bf Proof#1}.\ }{\hfill$\Box$
\par \vspace{11pt}}

\newtheorem{theorem}{Theorem}[section]
\newtheorem{lemma}[theorem]{Lemma}

\newtheorem{corollary}[theorem]{Corollary}
\newtheorem{remark}[theorem]{Remark}

\theoremstyle{definition}
\newtheorem{definition}[theorem]{Definition}

\newtheorem{problem}[theorem]{Problem}

\begin{document}
\bibliographystyle{plain}

\title{Backward Arcs in Hamilton Oriented  Cycles and Paths in Directed Graphs with  Independence Number Two}
\author[1]{S.~Gerke}
\author[2]{Q.~Guo}
\author[2,3]{G.~Gutin}
\author[3]{Y.~Hao}
\author[1]{W.~Veeranonchai}
\author[4,5]{A.~Yeo}

\affil[1]{Department of Mathematics, Royal Holloway University of London, UK}
\affil[2]{Department of Computer Science, Royal Holloway University of London, UK}
\affil[3]{Center for Combinatorics and LPMC, Nankai University, China}
\affil[4]{Department of Mathematics and Computer Science, University of Southern Denmark, Denmark}
\affil[5]{Department of Mathematics, University of Johannesburg, South Africa}

\date{\today}
\maketitle

\begin{abstract}
In a digraph $D=(V,A)$, an oriented path is a sequence $P=x_1x_2\dots x_p$ of distinct vertices such that either $x_ix_{i+1}\in A$ or $x_{i+1}x_{i}\in A$ or both for every $i\in [p-1]$. If $x_ix_{i+1}\in A$ in $P$, then $x_ix_{i+1}$ is a forward arc of $P$;
otherwise, $x_{i+1}x_{i}$ is a backward arc. The independence number $\alpha(D)$ is the maximum integer $p$ such that $D$ has a set of $p$ vertices where there is no arc between any pair of vertices. 
A digraph is $k$-connected if its underlying undirected graph is $k$-connected. 
Freschi and Lo (JCT-B 2024) proved that every $n$-vertex oriented graph with minimum degree $\delta\ge n/2$ has a Hamilton oriented cycle with at most $n-\delta$ backward arcs. We prove that 
every 2-connected digraph $D$ with $\alpha(D)\le 2$ has a Hamilton oriented cycle with at most five backward arcs, and  every 1-connected digraph $D$ with $\alpha(D)\le 2$ has a Hamilton oriented path with at most two backward arcs.
\end{abstract}

\section{Introduction}\label{sec:intro}


In a digraph $D=(V,A)$, an {\em oriented path} (or, just {\em orpath}) is a sequence $P=x_1x_2\dots x_p$ of distinct vertices such that either $x_ix_{i+1}\in A$ or $x_{i+1}x_{i}\in A$ or both for every $i\in [p-1]$. If $x_ix_{i+1}\in A$ in $P$, then $x_ix_{i+1}$ is a {\em forward arc} of $P$ and if  $x_{i+1}x_{i}\in A$, then  $x_{i+1}x_{i}$ is a {\em backward arc} of $P.$  An orpath $P$ is a {\em directed path} (or, just a {\em dipath}) if all arcs of $P$ are forward. An orpath is {\em Hamilton} if it contains all vertices of $D$. Similarly, we can define orcycles and dicycles. For an orpath or orcycle $R$, let $\sigma^+(R)$ and $\sigma^-(R)$ be the {\em number of forward and backward arcs}, respectively. 


Erd{\H o}s \cite{Erdos63} introduced the notion of graph discrepancy for 2-edge-colored undirected graphs in 1963.
His paper has launched an extensive research on the topic, see e.g. references in \cite{FL24,GKM23}. Sixty years later, Gishboliner, Krivelevich, and Michaeli \cite{GKM23} introduced an {\em oriented discrepancy of Hamilton cycles} in Hamilton undirected graphs. For a Hamilton orcycle $C$, its {\em discrepancy} is $|\sigma^+(C)-\sigma^-(C)|$. 
For a Hamilton graph $G$, its {\em oriented discrepancy} ${\rm disc}(G)$ is the maximum integer $d$ such that every orientation of $G$ has a Hamilton orcycle with discrepancy at least $d$. 
Clearly, to compute  ${\rm disc}(G)$ it is enough to consider Hamilton orcycles $C$ with $\sigma^+(C)\ge \sigma^-(C).$ Since $\sigma^+(C) -  \sigma^-(C)=2\sigma^+(C) - n$, where $n$ is the order of $G,$ the problem of finding a Hamilton orcycle of maximum discrepancy in an oriented graph is equivalent to the problem of finding a Hamilton orcycle with the maximum number of forward arcs (or minimum number of backward arcs). 

Gishboliner et al. \cite{GKM23} conjectured the following strengthening of Dirac's theorem on Hamilton undirected graphs: If the minimum degree $\delta$ of an $n$-vertex graph $G$ is greater or equal to $n/2$, then every orientation of $G$ 
has a Hamilton orcycle with at least $\delta$ forward arcs. Freschi and Lo \cite{FL24} proved this conjecture and asked whether there is a strengthening of Ore's theorem on Hamilton undirected graphs to oriented graphs. In  \cite{Ai+26Ore}  the authors came up with two conjectures for such a strengthening and provided some support for each of them. 

A digraph $D$ is {\em semicomplete multipartite} if its vertex set can be partitioned into at least two non-empty subsets called {\em parts} such that every arc of $D$ connects vertices from different parts and every pair of vertices from different parts is connected by at least one arc.  A semicomplete multipartite digraph is {\em semicomplete} if each part has only one vertex. A digraph is {\em locally semicomplete} if the out-neighborhood and in-neighborhood of each vertex induce semicomplete digraphs.

In \cite{Guo+SMDs} the authors studied the problem of computing the discrepancy of Hamiltonian orcycles and or paths for some  generalizations of tournaments. They proved that both problems are polynomial-time solvable for semicomplete digraphs and for locally semicomplete digraphs. They also proved that for some other generalizations of tournaments,  both problems are {\sf NP}-hard. 

The {\em independence number} $\alpha(D)$ of a digraph $D$ is the maximum integer $k$ such that $D$ has $k$ vertices without arcs between any pair of them.  Clearly, $\alpha(D)=1$ if and only if $D$ is {\em semicomplete}. 
The {\em underlying undirected multigraph} $UG(D)$ of $D$ is the multigraph obtained from $D$ by replacing each arc $xy$ of $D$ by an edge between $x$ and $y$. We call $D$ {\em connected} ({\em $\ell$-connected}, resp.) if $UG(D)$ is connected ($\ell$-connected, resp.). A digraph $D$ is {\em strongly connected} (or, just {\em strong}) if there is an $(x,y)$-dipath and a $(y,x)$-dipath for every pair $x,y$ of vertices of $D$. 

The following theorem of R{\'e}dei is well-known. 
\begin{theorem} \label{HamPathTour}\cite{redei_1934_hamilton_paths}
Every semicomplete digraph has a Hamilton dipath. 
\end{theorem}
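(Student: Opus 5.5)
The plan is to argue by induction on the number $n$ of vertices of the semicomplete digraph $D$. For $n=1$ the single vertex is a Hamilton dipath, and for $n=2$ the two vertices are joined by at least one arc, which is itself a Hamilton dipath. Since every induced subdigraph of a semicomplete digraph is again semicomplete, the induction stays inside the class.

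For the induction step, let $n\ge 2$, pick any vertex $v$, and apply the induction hypothesis to $D-v$. This gives a Hamilton dipath $P=x_1x_2\dots x_{n-1}$ of $D-v$. The task is then to insert $v$ into $P$, using only that $v$ is joined to every $x_i$ by at least one arc.

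The insertion splits into three cases.
\begin{enumerate}
\item If $vx_1\in A$, then $vx_1\dots x_{n-1}$ is a Hamilton dipath of $D$.
\item If $x_{n-1}v\in A$, then $x_1\dots x_{n-1}v$ is a Hamilton dipath of $D$.
\item Otherwise $vx_1\notin A$, so by semicompleteness $x_1v\in A$. Likewise $x_{n-1}v\notin A$ forces $vx_{n-1}\in A$.
\end{enumerate}

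In the third case, let $i$ be the largest index with $x_iv\in A$. Then $1\le i<n-1$. By maximality of $i$ we have $x_{i+1}v\notin A$, and semicompleteness gives $vx_{i+1}\in A$. Therefore $x_1\dots x_i\,v\,x_{i+1}\dots x_{n-1}$ is a Hamilton dipath of $D$.

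The only step needing care is this last one: choosing the "switch" index at which the arcs between $v$ and $P$ change from pointing into $v$ to pointing out of $v$. Taking the maximal $i$ with $x_iv\in A$ makes this switch immediate. Arcs in both directions (2-cycles) cause no difficulty, because we only ever use the existence of a required arc.
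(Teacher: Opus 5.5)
Your insertion argument is correct and complete. It is the standard inductive proof of Rédei's theorem, and choosing the maximal $i$ with $x_iv\in A$ does produce the required switch; when $n=2$ the third case is vacuous, since it would leave $x_1$ and $v$ non-adjacent. The paper gives no proof of this statement and only cites Rédei (1934) as a black box, so there is no alternative route to compare against.
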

Hence, every $n$-vertex semicomplete digraph $D$ has a Hamilton orpath with $0$ backward arcs and a Hamilton orcycle with at most $1$ backward arc.
Also, by Camion's theorem \cite{Camion59}, there is a Hamilton orcycle with $0$ backward arcs in $D$ if and only if $D$ is strongly connected. 

In this paper, we study bounds on the minimum number of backward arcs in a Hamilton orcycle (a Hamilton orpath, respectively) of digraphs with independence number 2, a class of digraphs which is much richer than the class of semicomplete digraphs.
Such digraphs were studied in several papers including \cite{BangJensen2006+,Bondy1995ShortProofChenManalastas,chen_manalastas_1983,FrickKatrenic2008,FrickEtAl2008,Havet2004,AardtEtAl2010}.

Our main results are as follows:

\begin{theorem} \label{mainHP}
(i) If $D$ is a connected digraph with $\alpha(D) \leq 2$, then $D$ contains a Hamilton orpath with at most two backward arcs. (ii) There is a connected digraph $D$ with $\alpha(D) \leq 2$ which has no Hamilton orpath with less than two backward arcs. 
\end{theorem}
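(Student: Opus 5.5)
The plan for (i) is to cover $V(D)$ by at most two vertex-disjoint dipaths and then splice them together, paying at most two backward arcs at the splice points. For (ii) I give an explicit six-vertex example.

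\textbf{Part (i), setup.} By the Gallai--Milgram theorem, $V(D)$ can be covered by at most $\alpha(D)\le 2$ vertex-disjoint dipaths. If one dipath suffices, we are done with $0$ backward arcs. Otherwise take a cover by two dipaths $P_1=x_1\dots y_1$ and $P_2=x_2\dots y_2$. Among all such covers, choose one where $|P_1|$ is maximum, so that $P_2$ cannot be inserted into $P_1$ in the usual way.

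\textbf{Part (i), cheap cases.} If $x_2$ and $y_1$ are adjacent, the sequence $P_1P_2$ is a Hamilton orpath with at most one backward arc. The same holds if $y_2$ and $x_1$ are adjacent, using $P_2P_1$. So assume both pairs are non-adjacent.

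\textbf{Part (i), splicing.} Recall that in a digraph with $\alpha\le 2$, every set of vertices non-adjacent to a fixed vertex induces a semicomplete digraph. The splice I look for is a vertex $u$ on $P_1$ with successor $u^+$ such that $u\sim x_2$ and $u^+\sim y_2$. Then
\[
x_1P_1u\,x_2P_2y_2\,u^+P_1y_1
\]
is a Hamilton orpath with at most two backward arcs. The symmetric splice uses $u\sim y_2$ and $u^+\sim x_2$ with $P_2$ traversed in its own direction, giving the sequence $x_1P_1u$, then the connecting arc, then $P_2$, then $u^+P_1y_1$, arranged appropriately. The orpath may also start or end inside $P_2$, so further variants are allowed. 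For example, $y_2$ may be joined to a vertex $u$ of $P_1$, giving $x_2P_2y_2\,uP_1y_1$ followed by reattaching $x_1P_1u^-$ via an arc $u^-\sim y_1$.

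\textbf{Part (i), main obstacle.} The hardest step is to show that if none of these splices exist, then $\alpha(D)\ge 3$ or $D$ is disconnected. Let $S$ be the set of vertices of $P_1$ adjacent to $x_2$, and $T$ the set of vertices whose predecessor on $P_1$ is adjacent to $y_2$. The absence of splices forces $S\cap T=\emptyset$ and its variants. From this I would use the triples $\{x_2,y_1,\cdot\}$ and $\{y_2,x_1,\cdot\}$ to get a contradiction. Since $x_2\not\sim y_1$, every other vertex is adjacent to $x_2$ or to $y_1$; in particular $x_1$ is adjacent to $x_2$ or to $y_1$. Similarly every vertex is adjacent to $y_2$ or to $x_1$. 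Walking along $P_1$ from $x_1$ to $y_1$, the label ``adjacent to $x_2$'' must switch at some consecutive pair to ``adjacent to $y_2$'', which produces a splice. Connectivity is used to rule out the degenerate situation where there are no arcs at all between $P_1$ and $P_2$. If this direct argument leaves gaps, I would fall back on maximality of $|P_1|$: the standard insertion argument for $x_2$ into $P_1$ gives the extra structure needed.

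\textbf{Part (ii), construction.} Let $T$ be the transitive tournament on $\{a,c,b\}$ with arcs $a\to c$, $c\to b$, $a\to b$. Let $T'$ be the transitive tournament on $\{e,d,f\}$ with arcs $e\to d$, $d\to f$, $e\to f$. Let $D$ be $T\cup T'$ together with the single arc $c\to d$.

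\textbf{Part (ii), properties.} $D$ is connected. We have $\alpha(D)=2$, since an independent set contains at most one vertex from each tournament. Because $cd$ is a bridge of $UG(D)$, every Hamilton orpath, read in a suitable direction, traverses all of $V(T)$ ending at $c$, then the arc $cd$, then all of $V(T')$ starting at $d$. A segment of this form has $0$ backward arcs only if it is a Hamilton dipath of $T$ ending at $c$, respectively of $T'$ starting at $d$. The unique Hamilton dipaths are $acb$ and $edf$, in which $c$ and $d$ are interior vertices, so neither exists. Hence each side contributes at least one backward arc, and there are at least two backward arcs in total.
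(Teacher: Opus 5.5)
\textbf{Part (i) has a genuine gap.} It sits exactly at the step you call the main obstacle. The claim that if none of your splices exists then $\alpha(D)\ge 3$ or $D$ is disconnected is false, and your own example from (ii) refutes it.

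A dipath through $cd$ lies inside $acdf$, so it misses $b$ and $e$, and the uncovered vertices of $T$ and of $T'$ have no arcs between them. Hence the only cover by two dipaths is $\{acb,edf\}$. In either labelling, no end of one path is adjacent to an end of the other, and no vertex of $P_1$ is adjacent to $x_2$ or to $y_2$: the only arc between the paths joins the interior vertices $c$ and $d$. So none of your splices exists, although $D$ is connected with $\alpha(D)=2$.

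Your ``walking'' argument breaks for a specific reason. The two facts you derive, namely $v\sim x_2$ or $v\sim y_1$, and $v\sim x_1$ or $v\sim y_2$, do not make ``adjacent to $x_2$'' and ``adjacent to $y_2$'' complementary along $P_1$. A vertex of $P_1$ may be adjacent only to $x_1$ and $y_1$, and here every vertex of $P_1$ is of that kind.

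The orpath that actually works in this example is $b\,a\,c\,d\,f\,e$. It rotates each path through the arc joining its two ends, then links the rotated paths at an arbitrary arc between them. This is the move of Lemma~\ref{TwoTournaments}, and it is missing from your list. Even with it added, you would still need a proof that some allowed move always applies. Rotation needs $x_i\sim y_i$, which is not automatic for your paths. The appeal to maximality of $|P_1|$ and a ``standard insertion argument'' does not supply such a proof.

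The paper avoids this case analysis entirely. Lemma~\ref{adding-arcs-for-strong} works with the initial and terminal strong components and shows one of two things. Either $V(D)$ splits into two semicomplete digraphs, which are then handled by rotation as in Lemma~\ref{TwoTournaments}. Or adding the reverses of at most two arcs makes $D$ strong, and then Theorem~\ref{HamPath} gives a Hamilton dipath of the augmented digraph. That dipath is a Hamilton orpath of $D$ with at most two backward arcs.

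\textbf{Part (ii)} is essentially the paper's example (two transitive tournaments joined by one arc) and is fine, apart from one slip. Reversing an orpath changes its number of backward arcs, so you cannot just ``read in a suitable direction''. You must also treat orpaths that visit $T'$ first. There $dc$ is backward. Since $edf$ does not end at $d$ and $acb$ does not start at $c$, each side still costs at least one backward arc, giving at least three in total.
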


\begin{theorem} \label{mainHC}
(i) If $D$ is a 2-connected digraph with $\alpha(D) \leq 2$, then $D$ contains a Hamilton orcycle with at most five backward arcs. (ii) There is a 2-connected digraph $D$ with $\alpha(D) \leq 2$ which has no Hamilton orcycle with less than four backward arcs. 
\end{theorem}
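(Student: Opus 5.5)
For part (i), the plan is to start from a Hamilton orpath supplied by Theorem~\ref{mainHP}(i) and close it into a Hamilton orcycle, paying only a bounded number of extra backward arcs. Let $P=x_1x_2\dots x_n$ be a Hamilton orpath of $D$ with $\sigma^-(P)\le 2$.

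If $x_1$ and $x_n$ are adjacent, we close $P$ directly and obtain a Hamilton orcycle with at most $3$ backward arcs. Otherwise $\{x_1,x_n\}$ is independent. Then $\alpha(D)\le 2$ forces every other vertex to be adjacent to $x_1$ or to $x_n$. We colour each internal vertex $x_i$ by which endpoint(s) it sees. Since $D$ is $2$-connected, removing a single vertex cannot separate the neighbourhood of $x_1$ from that of $x_n$. The aim is to extract from this a \emph{crossing} index $i$ with $x_i$ adjacent to $x_n$ and $x_{i+1}$ adjacent to $x_1$. This would give the Pósa-type cycle
\[
x_1 P x_i \, x_n P^{-} x_{i+1} \, x_1 .
\]
A non-crossing colour pattern (all $x_1$-neighbours before all $x_n$-neighbours) yields a single vertex whose removal disconnects $UG(D)$. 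That contradicts $2$-connectivity, so a crossing index exists.

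The main obstacle is that the rotated cycle traverses the segment $x_{i+1}P x_n$ in reverse. All forward arcs on that segment would then become backward, so the naive rotation gives no constant bound. My first approach is to choose $P$ more carefully. Among all Hamilton orpaths with at most two backward arcs, pick one whose final segment after the last backward arc is as short as possible, or which minimises some potential. Alternatively, rebuild the reversed segment: the vertices of $x_{i+1}Px_n$ together with $\alpha\le 2$ should allow it to be re-covered by at most two dipaths. Here I would use the strong-component decomposition of the induced subdigraph, with Rédei's Theorem~\ref{HamPathTour} on semicomplete pieces, so that reversing costs only $O(1)$ arcs.

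Concretely, I would split into two cases. In the first case $D$ is not strong; the acyclic ordering of strong components, each of which has independence number at most $2$, lets us route the cycle forward through the components and return along one backward stretch. In the second case $D$ is strong; then there should be a Hamilton dipath, in the spirit of Chen--Manalastas, which reduces the problem to closing a $0$-backward path. The bookkeeping should give at most $2$ backward arcs from the path, $2$ from the two closing connections, and $1$ from gluing the rebuilt segment, for at most $5$ in total.

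For part (ii), I will construct an explicit small $2$-connected digraph of independence number $2$. The idea is to take a few semicomplete blocks, for example transitive tournaments, with no arcs between certain pairs of blocks, and join them through a $2$-vertex-cut-like pattern of arcs. Any Hamilton orcycle must then pass between the blocks several times, and the orientations of the connecting arcs should force a direction change at each passage.

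Two observations shape the construction. First, along a Hamilton orcycle, every local sink of the cycle ends a forward run and begins a backward run. Second, a sink of $D$ is necessarily a local sink of the cycle. However, two sinks of $D$ cannot be adjacent, so $\alpha\le 2$ allows at most $2$ sinks and at most $2$ sources; counting sinks and sources of $D$ alone therefore gives only $2$. The lower bound of $4$ must instead come from the block structure: I would show that each of the (at least four) traversals between blocks forces its own backward arc, and verify by a short case analysis that every Hamilton orcycle has at least four backward arcs.
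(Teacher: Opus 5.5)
Your proposal does not yet prove either part.

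\textbf{Part (i).} You correctly identify the obstacle: the P\'osa rotation $x_1Px_i\,x_nP^{-}x_{i+1}\,x_1$ turns every forward arc of $x_{i+1}Px_n$ into a backward arc. None of your remedies controls this.
\begin{itemize}
\item ``Minimise some potential'' is never specified.
\item In the strong case, the Hamilton dipath of Theorem~\ref{HamPath} may have non-adjacent end-vertices. Closing it then hits exactly the same rotation problem, so nothing has been reduced.
\item In the non-strong case, ``returning along one backward stretch'' costs one backward arc per arc of the stretch. Moreover, the strong components need be neither semicomplete nor traversable by dipaths with prescribed ends.
\item A Gallai--Milgram cover of the reversed segment by two dipaths says nothing about attaching those dipaths at bounded cost.
\end{itemize}
So the tally $2+2+1$ is asserted, not derived.

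The crossing step is also false as stated. A non-crossing pattern of end-vertex neighbourhoods does not give a cut vertex, because chords between internal vertices can bypass it. For example, take the path $x_1x_2x_3x_4x_5$ with chords $x_1x_3$, $x_3x_5$, $x_2x_4$, oriented arbitrarily. It is $2$-connected, has $\alpha=2$, $x_1x_5$ is a non-edge, and there is no crossing index.

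The missing idea is a structure that makes re-routing cheap. The paper proceeds as follows.
\begin{itemize}
\item It first disposes of $|V(D)|\le 11$: a Hamilton orcycle exists by Chv\'atal--Erd\H{o}s, and it or its reverse has at most $5$ backward arcs.
\item It takes a largest semicomplete set $X$; then $|X|\ge 4$ since $R(3,4)=9$.
\item It closes a Hamilton orpath $y_1\dots y_k$ of $D[V\setminus X]$ with at most two backward arcs (Theorem~\ref{mainHP}) through a Hamilton orpath of $D[X]$ between chosen neighbours of $y_1$ and $y_k$.
\item Here $\alpha\le 2$ gives $N_X(y_1)\cup N_X(y_k)=X$ when $y_1,y_k$ are non-adjacent. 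The semicomplete path lemmas (e.g.\ Lemma~\ref{PathStrongSemiC} and its non-strong analogues) bound the cost inside $X$.
\item Lemma~\ref{DipathPlusSemi} and a long case analysis handle the configurations where this direct closing fails.
\end{itemize}

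\textbf{Part (ii).} You give no digraph and no verification, and the proposed mechanism is not the right one. With $\alpha\le 2$ you cannot have three semicomplete blocks with no arcs between any two of them. Two blocks joined $2$-connectedly by two disjoint arcs are crossed exactly twice, and some traversal direction has at most one backward crossing arc. Hence the remaining backward arcs must be forced inside the blocks, by the choice of attachment vertices.

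The paper's example uses transitive tournaments listed in acyclic order: $Q_1=TT(u,x_1,v,y_1,w)$ and $Q_2=TT(w',y_2,v',x_2,u')$, plus the arcs $x_1y_2$ and $y_1x_2$.
\begin{itemize}
\item Every Hamilton orcycle uses both connecting arcs, so it contains a Hamilton path of each $Q_i$ between $x_i$ and $y_i$.
\item In $TT(v_1,\dots,v_5)$, a Hamilton $(v_2,v_4)$-orpath has at least one backward arc, since $v_5$ is an interior sink.
\item A Hamilton $(v_4,v_2)$-orpath has at least two: forward arcs raise the index, and the path must drop from $4$ to $3$ and then from $3$ to $2$.
\end{itemize}
Since $Q_1$ and $Q_2$ are oriented oppositely, either traversal direction pays $1+2$ inside the blocks plus one backward connecting arc, giving at least four. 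This is Theorem~\ref{lowbound4k} with $k=2$.
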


We have the following remarks on Theorems \ref{mainHP} and \ref{mainHC}.

\begin{remark}
The fact that Theorems \ref{mainHP} and \ref{mainHC} give a constant  upper-bound for the number of backward arcs  is somewhat surprising. For example, in another class of generalizations of tournaments, semicomplete multipartite digraphs, there is no such a bound: consider a semicomplete bipartite digraph with two parts of the same size where all arcs are oriented from one part to the other one. 
\end{remark}

\begin{remark}
Connectivity of $UG(D)$ in Theorem \ref{mainHP} and 2-connectivity of $UG(D)$ in Theorem \ref{mainHC} are necessary and sufficient conditions for $D$ to have a Hamilton orpath and a Hamilton orcycle, respectively. Clearly, connectivity is a necessary condition for $D$ to have a Hamilton orpath. It is sufficient as every strongly connected digraph $D$ with $\alpha(D)\le 2$ has a Hamilton dipath, see Theorem \ref{HamPath} below. Clearly, 2-connectivity is necessary for an undirected graph to have a Hamilton cycle. Chv{\'a}tal  and Erd{\H{o}}s \cite{ChEr72} proved that 
an undirected graph with independence number at most its connectivity has a Hamilton cycle. Thus, a 2-connected digraph $D$ with independence number 2 has a Hamilton orcycle. 
\end{remark}

\begin{remark} 
In Theorem \ref{mainHC} there is a small gap of $1$ between the upper bound and the lower bound and we believe the lower bound is sharp.
\end{remark}


\paragraph{Additional Notation}
For $X\subseteq V$ and $Y\subseteq V$, we say that an arc $xy$ is an $(X,Y)$-{\em arc} if $x\in X$ and $y\in Y$ and a dipath $P=x_1x_2\dots x_p$ is an $(X,Y)$-{\em dipath} if $x_1\in X$ and $x_p\in Y.$ For an orpath $P=x_1x_2\dots x_p$ and $1\le i\le j\le p$, $P[x_i,x_j]=x_ix_{i+1}\dots x_j$. A similar notation can be used for orcycles, but then the indexes are taken modulo $p.$ 
We write $X \Rightarrow Y$ in $D$ if for every $x \in X$ and every $y \in Y$ we have $xy \in A$.



\2


To prove Theorems \ref{mainHP} and \ref{mainHC} , we will use the following well-known results.


\begin{theorem} \label{HamPath}\cite{chen_manalastas_1983} (Chen-Manalastas)
 If $D$ is a strong digraph with $\alpha(D) \leq 2$ then $D$ contains a Hamilton dipath.
\end{theorem}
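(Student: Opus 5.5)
Theorem \ref{HamPath} is quoted from \cite{chen_manalastas_1983}, but I would reprove it via the Gallai--Milgram theorem and a path-merging argument. By Gallai--Milgram, the vertex set of any digraph can be partitioned into at most $\alpha(D)\le 2$ vertex-disjoint dipaths. If one dipath suffices we are done. Otherwise, among all partitions of $V$ into two dipaths $P=x_1\dots x_p$ and $Q=y_1\dots y_q$, I would fix one that is extremal. The first choice to try is maximising $p=|P|$; a secondary criterion, such as the position of the first vertex of $P$ that dominates something on $Q$, can be added if needed.

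The first step is to exploit maximality together with $\alpha(D)\le 2$.
\begin{itemize}
\item If $x_py_1\in A$ or $y_qx_1\in A$, the two dipaths concatenate into a Hamilton dipath.
\item No vertex $y_j$ can be inserted into $P$. That is, there are no arcs $x_iy_j$ and $y_jx_{i+1}$ simultaneously, and neither $y_jx_1$ nor $x_py_j$ may hold when it would let us move a terminal segment of $Q$ onto $P$.
\end{itemize}
Applying $\alpha\le 2$ to triples such as $\{x_p,y_q,z\}$ and $\{x_1,y_1,z\}$ then forces many arcs. For instance, if $x_p$ and $y_q$ are nonadjacent, every other vertex is adjacent to one of them. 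Similarly, by maximality, either $y_qx_p\in A$ or $x_py_q\in A$, and then we can move $x_p$ or $y_q$ between the two paths without shrinking $P$.

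The second step uses strong connectivity. Since $D$ is strong, there is an arc from $V(Q)$ to $V(P)$ and an arc from $V(P)$ to $V(Q)$. Take the arc $y_jx_i$ from $Q$ to $P$ with $i$ minimal. Then $y_1\dots y_jx_i\dots x_p$ is a dipath, and the leftover pieces $x_1\dots x_{i-1}$ and $y_{j+1}\dots y_q$ must be reattached. Here I would use the forced adjacencies from the first step, applied to the ends $x_{i-1}$, $y_{j+1}$, $x_p$, $y_q$. Any triple of these containing a nonadjacent pair must have an arc from the third vertex to one of the pair. Each such arc either extends a path, contradicting the choice of the partition, or produces a dicycle through the endpoints. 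A dicycle containing an endpoint of one path, together with the other path, can be reopened into a longer path, again a contradiction.

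The main obstacle is this final case analysis. There we must check that every arc forced by $\alpha\le 2$ among the four or five endpoint vertices gives a contradiction to the extremal choice, either by concatenation, by insertion, or by rotating a dicycle. I expect to need the secondary extremality condition, namely minimal $i$ for the backward arc from $Q$ to $P$, to rule out the configurations in which the forced arcs all point "the wrong way". If the direct analysis becomes unwieldy, my fallback is the classical route. First I would show that a strong digraph with $\alpha\le 2$ has a spanning subdigraph consisting of at most two dicycles that share a vertex or are joined by an arc. This follows by applying the same endpoint arguments to a longest dicycle $C$ and the semicomplete-like remainder $D-V(C)$. From such a subdigraph a Hamilton dipath is immediate.
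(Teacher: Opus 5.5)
The paper does not prove this statement. It quotes it from Chen and Manalastas and uses it as a black box (e.g.\ in Lemma~\ref{AddArc}), so your argument has to stand on its own, and it does not. The case analysis you call ``the main obstacle'' is where the entire theorem lives, and the preliminary facts you plan to feed into it are partly wrong.

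With $|P|$ maximum over partitions into two dipaths, you only get the following: no arc from $x_p$ to $V(Q)$, no arc from $V(Q)$ to $x_1$, and $y_1,y_q$ cannot be inserted into $P$. An interior $y_j$ with $x_iy_j,y_jx_{i+1}\in A$ is \emph{not} excluded, because deleting it splits $Q$ into two dipaths. Nothing forces $x_p$ and $y_q$ to be adjacent, and $x_py_q$ is already excluded. If $y_qx_p\in A$, then moving $x_p$ onto $Q$ shrinks $P$.

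Above all, $\alpha(D)\le 2$ only guarantees \emph{some} arc between the third vertex of a triple and the non-adjacent pair, not ``an arc from the third vertex''. Its direction is exactly what decides whether you can concatenate or insert, and nothing in your extremal choice controls it. Likewise, reopening a dicycle through an endpoint gives a dipath on the same vertex set, and joining it to the other path needs an arc in a prescribed direction. The leftover segments $x_1\dots x_{i-1}$ and $y_{j+1}\dots y_q$ still have to be absorbed, so no contradiction with maximality follows.

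The fallback does not close the gap either. That a strong digraph with $\alpha\le 2$ is spanned by two dicycles meeting in a common (possibly empty) dipath is essentially the structural theorem Chen and Manalastas actually prove. It is the hard part, not something that ``follows by the same endpoint arguments''. Your formulation is also too loose in two ways:
\begin{itemize}
\item Two spanning dicycles that merely share vertices need not yield a Hamilton dipath; for example, $w_1aw_2bw_1$ and $w_1cw_2dw_1$ span a digraph with none. You need their intersection to be one common subpath, or empty and joined by an arc.
\item The remainder of a longest dicycle need not be semicomplete. Take vertices $c_1,c_2,c_3,u,v$ with arcs $c_1c_2,c_2c_3,c_3c_1,uc_1,c_2u,vc_2,c_3v$. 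This digraph is strong with $\alpha=2$, every dicycle has three vertices, and the longest dicycle $c_1c_2c_3c_1$ leaves the independent pair $\{u,v\}$.
\end{itemize}
What is missing is a genuine global argument that uses strong connectivity to merge the two Gallai--Milgram paths, or to build the two-cycle structure. As written, the proposal only indicates where such an argument would have to go.
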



\begin{theorem}\cite{PathPartition} (Gallai-Milgram) \label{PathPartition} For every integer $k\ge 1$,
each digraph $D$ with $\alpha(D) \le k$ has at most $k$ pairwise disjoint dipaths covering all vertices of $D$.
\end{theorem}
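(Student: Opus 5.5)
We prove the stronger statement that a digraph $D$ with $\alpha(D)\le k$ has a path partition with at most $\alpha(D)$ dipaths; the theorem follows since $\alpha(D)\le k$. Start from any path partition, for instance all trivial one-vertex dipaths. It suffices to prove the following reduction claim. \emph{If $\mathcal{P}$ is a partition of $V(D)$ into $m>\alpha(D)$ dipaths whose set of terminal vertices is $T$, then $D$ has a partition into $m-1$ dipaths whose set of terminal vertices is a subset of $T$.} Applying the claim repeatedly lowers the number of dipaths until it is at most $\alpha(D)$.

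We prove the claim by induction on $|V(D)|$; the case of one vertex is vacuous. Since $|T|=m>\alpha(D)$, the set $T$ is not independent, so there is an arc $t_it_j$ with $t_i,t_j\in T$, where $t_i$ ends $P_i$ and $t_j$ ends $P_j$ with $i\ne j$.

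\emph{Case 1: $P_j$ is the single vertex $t_j$.} We append $t_j$ to $P_i$ via the arc $t_it_j$ and delete $P_j$. This leaves $m-1$ dipaths with terminal set $T\setminus\{t_i\}\subseteq T$.

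\emph{Case 2: $P_j$ has at least two vertices.} Let $t_j^-$ be the predecessor of $t_j$ on $P_j$, and put $D'=D-t_j$. Deleting a vertex cannot increase the independence number, so $\alpha(D')\le\alpha(D)<m$. In $D'$ the dipaths of $\mathcal{P}$, with $P_j$ shortened to end at $t_j^-$, form a partition into $m$ dipaths with terminal set $T'=(T\setminus\{t_j\})\cup\{t_j^-\}$. By induction, $D'$ has a partition $\mathcal{Q}$ into $m-1$ dipaths whose terminal set $T''$ is a subset of $T'$. We now reinsert $t_j$ by appending it to the end of a suitable dipath of $\mathcal{Q}$.
\begin{itemize}
\item If $t_j^-\in T''$, append $t_j$ to the dipath of $\mathcal{Q}$ ending at $t_j^-$, using the arc $t_j^-t_j$. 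The new terminal set is $(T''\setminus\{t_j^-\})\cup\{t_j\}\subseteq T$.
\item If $t_j^-\notin T''$, then $T''$ is a subset of $T'\setminus\{t_j^-\}=T\setminus\{t_j\}$. Both sets have exactly $m-1$ elements, so $T''=T\setminus\{t_j\}$, and in particular $t_i\in T''$. Append $t_j$ to the dipath of $\mathcal{Q}$ ending at $t_i$, using the arc $t_it_j$. The new terminal set is $(T''\setminus\{t_i\})\cup\{t_j\}\subseteq T$.
\end{itemize}
In both subcases we obtain $m-1$ dipaths in $D$ with terminal set contained in $T$, which completes the induction.

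The main obstacle is formulating the right inductive hypothesis. Merely reducing the number of dipaths does not carry through the induction. What makes it work is the extra control that the terminal set of the new partition lies inside $T$; this is exactly what guarantees that a dipath ending at $t_j^-$ or at $t_i$ exists when $t_j$ has to be reattached. Everything else is a routine check.
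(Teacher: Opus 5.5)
Your proof is correct. The strengthened claim, which lowers the number of dipaths by one while keeping the new terminal set inside $T$, is the right inductive statement. The delicate step also checks out. Since $t_j^-\notin T$, the inclusion $T''\subseteq T'\setminus\{t_j^-\}=T\setminus\{t_j\}$ together with $|T''|=m-1$ forces $T''=T\setminus\{t_j\}$. So a dipath of $\mathcal{Q}$ ending at $t_i$ really exists, and $t_j$ can always be reattached.

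The paper does not prove this statement at all. It quotes the Gallai--Milgram theorem as a known result with a citation, so there is no internal proof to compare against. Your argument is the classical terminal-set proof of Gallai--Milgram.

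The other standard route is the one presented in Diestel's book. It fixes a path cover whose terminal set is inclusion-minimal and shows, by induction on $|V(D)|$, that some independent set contains exactly one vertex of every path of that cover. That version gives a slightly stronger structural conclusion: an independent transversal of the cover. Your version is more directly constructive: an explicit procedure that merges dipaths one at a time until at most $\alpha(D)$ remain.
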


\paragraph{Paper organization} We prove Theorem \ref{mainHP}  in Section \ref{sec:HP}.  We prove part (i) of Theorem \ref{mainHC}  in Section \ref{sec:HC} and Part (ii) in Section \ref{sec:OP} (in Theorem \ref{lowbound4k} for $k=2$) where we briefly consider digraphs of higher independence number and state some open problems.

\section{Proof of Theorem \ref{mainHP}}\label{sec:HP}

To prove Part (ii) of Theorem \ref{mainHP} consider the following simple example.
Let $T_a$ and $T_b$ be the  two vertex-disjoint transitive tournaments with
\[
V(T_a)=\{a_1,a_2,\dots,a_k\},\quad 
V(T_b)=\{b_1,b_2,\dots,b_m\},
\]
where $k,m\ge 3$, and 
\[
A(T_a)=\{a_ia_j:\ \text{ for all } 1\le i<j\le k\}, \quad 
A(T_b)=\{b_ib_j:\ \text{ for all } 1\le i<j\le m\}.
\]
Define a digraph $D$ by taking the disjoint union of $T_A$ and $T_B$ and adding a single arc from $a_1$ to $b_2$. See Figure \ref{fig: tight eg for HOP} when $k=m=3$.

Clearly, $\alpha(D)=2$ as any independent set can contain at most one vertex from $T_a$ and at most one vertex from $T_b$. Let $P$ be a Hamilton orpath of $D$. If the arc $a_1b_2$ is backward in $P$, then $P$ contains another backward arc in $T_b$, and if the arc $a_1b_2$ is forward of $P$, then $P$ contains at least two backward arcs, one in $T_a$ and another in $T_b$.
Consequently, any Hamilton orpath of $D$ contains at least two backward arcs. Moreover, the Hamilton orpath $a_2a_3 \dots a_k a_1 b_2 b_3 \dots b_m b_1$ of $D$ contains exactly two backward arcs.

\begin{figure}[h]
 \centering  
\begin{tikzpicture}[node distance=1.5cm,scale=0.9]

    \draw (0,0) node[fill=black,minimum size =4pt,circle,inner sep=1pt,label=west:$a_3$] (a3) {};
    \draw (0,1.5) node[fill=black,minimum size =4pt,circle,inner sep=1pt,label=west:$a_2$] (a2) {};
    \draw (0,3) node[fill=black,minimum size =4pt,circle,inner sep=1pt,label=west:$a_1$] (a1) {};


    \draw (3,0) node[fill=black,minimum size =4pt,circle,inner sep=1pt,label=west:$b_3$] (b3) {};
    \draw (3,1.5) node[fill=black,minimum size =4pt,circle,inner sep=1pt,label=west:$b_2$] (b2) {};
    \draw (3,3) node[fill=black,minimum size =4pt,circle,inner sep=1pt,label=west:$b_1$] (b1) {};

\draw[arrows={-Stealth[reversed, reversed]}] (a1) -- (a2);
\draw[arrows={-Stealth[reversed, reversed]}] (a2) -- (a3);

\draw[arrows={-Stealth[reversed, reversed]}] (a1) to[out=315, in=40] (a3);

\draw[arrows={-Stealth[reversed, reversed]}] (b1) -- (b2);
\draw[arrows={-Stealth[reversed, reversed]}] (b2) -- (b3);
\draw[arrows={-Stealth[reversed, reversed]}] (b1) to[out=315, in=40] (b3);

\draw[arrows={-Stealth[reversed, reversed]}, thick] (a1) -- (b2);

\end{tikzpicture}
\caption{An example when $k=m=3$}
\label{fig: tight eg for HOP}
\end{figure}
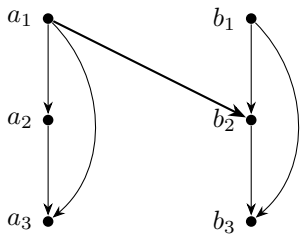

We first prove Part (i) of Theorem \ref{mainHP} for a special case. 

\begin{lemma} \label{TwoTournaments}
 Let $D$ be a connected digraph with $\alpha(D) \leq 2$ and such that $V(D) = V_1 \cup V_2$, $V_1\cap V_2=\emptyset$ and $D[V_i]$ is semicomplete for $i \in \{1,2\}$.
Then $D$ contains a Hamilton orpath with at most two backward arcs.
\end{lemma}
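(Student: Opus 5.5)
Let $D_i=D[V_i]$ for $i\in\{1,2\}$. Since $D$ is connected, there is an arc between $V_1$ and $V_2$. By symmetry (reversing all arcs if needed, which swaps forward and backward arcs of a reversed orpath but keeps the count on the reversed sequence equal), we may assume there is an arc $uv$ with $u\in V_1$, $v\in V_2$. The plan is to build a Hamilton dipath of each semicomplete piece $D_i$ that ends at $u$ in $D_1$ and starts at $v$ in $D_2$, at the cost of at most one backward arc per piece. Concatenating through the forward arc $uv$ then gives a Hamilton orpath of $D$ with at most two backward arcs.

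\emph{Step 1 (ending at a prescribed vertex).} In $D_1$, consider $D_1-u$, which is semicomplete (or empty). By Theorem \ref{HamPathTour} it has a Hamilton dipath $Q_1=x_1\dots x_r$. The orpath $x_1\dots x_r u$ has at most one backward arc, namely $x_r u$ if $u x_r\in A$ and $x_ru\notin A$. This step is fine because $x_r$ and $u$ lie in the semicomplete $D_1$, so they are adjacent.

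\emph{Step 2 (starting at a prescribed vertex).} Symmetrically, let $Q_2=y_1\dots y_s$ be a Hamilton dipath of $D_2-v$. Then $v y_1\dots y_s$ has at most one backward arc, namely $vy_1$.

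\emph{Step 3 (concatenation).} The sequence $x_1\dots x_r\,u\,v\,y_1\dots y_s$ is a Hamilton orpath of $D$. Its only possible backward arcs are at $x_r u$ and $v y_1$, and $uv$ is forward, so it has at most two backward arcs. Degenerate cases with $V_1=\{u\}$ or $V_2=\{v\}$ are immediate, and $V_2=\emptyset$ reduces to Theorem \ref{HamPathTour}.

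The main obstacle is essentially nonexistent: the hypothesis $\alpha(D)\le2$ is not even needed beyond the semicompleteness of the two parts. The only care needed is to justify that one connecting arc suffices and to handle the orientation convention cleanly. The first thing to do is simply assume $uv\in A$, choosing the labels of $V_1,V_2$ so this holds, rather than reversing arcs.
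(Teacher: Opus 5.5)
Your proof is correct and follows essentially the same argument as the paper. The paper also fixes a cross arc $p_iq_j$ after relabeling $V_1,V_2$, and arranges Hamilton orpaths of the two semicomplete parts to end at $p_i$ and start at $q_j$, at a cost of at most one backward arc per part. The only difference is that the paper cyclically rotates a full Hamilton dipath (giving $p_{i+1}\dots p_sp_1\dots p_i$), where you instead attach $u$ to a Hamilton dipath of $D_1-u$ (and $v$ to one of $D_2-v$).
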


\begin{proof}
 Let $D$ be a connected digraph with $\alpha(D) \leq 2$ and such that $V(D) = V_1 \cup V_2$, $V_1\cap V_2=\emptyset$ and $D[V_i]$ is semicomplete for $i \in \{1,2\}$.
Let $P=p_1 p_2 p_3 \ldots p_s$ be a Hamilton dipath in $D[V_1]$ and let
$Q=q_1 q_2 q_3 \ldots q_t$ be a Hamilton dipath in $D[V_2]$ (both Hamilton dipaths exist by Theorem~\ref{HamPathTour}).
As $D$ is connected we may assume, without loss of generality, that there exists an arc $p_i q_j$ in $D$.
Observe that 
\(
R=p_{i+1} p_{i+2} \ldots p_s p_1 p_2 \ldots p_i q_j q_{j+1} \ldots q_t q_1 q_2 \ldots q_{j-1}
\)
is a Hamilton orpath, where only arcs $p_s p_1$ and $q_t q_1$ may be backward.
\end{proof}

In the rest of the section, we will prove the general case of Part (i) of Theorem \ref{mainHP}.


\begin{lemma} \label{AddArc}
 Let $D$ be a connected digraph with $\alpha(D) \leq 2$ and let $A' \subseteq A(D)$.
If $D$ becomes strong by adding the reverse arc of the arcs in $A'$, then 
 $D$ contains an orpath with at most $|A'|$ backward arcs.
\end{lemma}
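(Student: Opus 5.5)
Presumably "orpath" here means a Hamilton orpath; we prove that. Let $D'$ be the digraph obtained from $D$ by adding, for every arc $xy\in A'$, the reverse arc $yx$. (If $yx$ is already in $A(D)$, adding it changes nothing.)

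First we check that Theorem~\ref{HamPath} applies to $D'$. The new digraph has the same vertex set as $D$ and contains every arc of $D$. Adding arcs cannot create a larger independent set, so $\alpha(D')\le\alpha(D)\le 2$. By hypothesis $D'$ is strong. Hence Theorem~\ref{HamPath} gives a Hamilton dipath $P=x_1x_2\dots x_n$ in $D'$.

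Next we read $P$ as a sequence of vertices in $D$. For each $i\in[n-1]$, the arc $x_ix_{i+1}$ either lies in $A(D)$ or is the reverse of some arc in $A'$. In the second case $x_{i+1}x_i\in A'\subseteq A(D)$. So in every case $x_ix_{i+1}\in A(D)$ or $x_{i+1}x_i\in A(D)$. Therefore $P$ is a Hamilton orpath of $D$.

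Finally we count backward arcs. A step $x_ix_{i+1}$ is backward in $D$ only if $x_ix_{i+1}\notin A(D)$, and then $x_{i+1}x_i$ is an arc of $A'$. The vertices of $P$ are distinct, so distinct backward steps correspond to distinct unordered pairs $\{x_i,x_{i+1}\}$. They therefore use distinct arcs of $A'$. Steps where both $x_ix_{i+1}$ and $x_{i+1}x_i$ lie in $A(D)$ count as forward and cost nothing. Hence $\sigma^-(P)\le |A'|$.

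None of these steps is hard; all the difficulty sits in Theorem~\ref{HamPath}. The only point needing care is that the independence number does not increase when arcs are added.
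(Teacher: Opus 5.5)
Your proof is correct and follows the paper's argument: add the reversed arcs to get a strong digraph $D'$ with $\alpha(D')\le 2$, take a Hamilton dipath from Theorem~\ref{HamPath}, and note that each backward arc of it in $D$ comes from a distinct arc of $A'$. You are also right that the intended conclusion is a \emph{Hamilton} orpath, and your explicit check that adding arcs cannot increase the independence number fills in a step the paper leaves implicit.
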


\begin{proof}
 Let $A' \subseteq A(D)$ and let $D'$ be obtained 
from $D$ by adding the reverse arcs of the arcs in $A'$.
If $D'$ is strong, then it contains a Hamilton dipath, $P$, by Theorem~\ref{HamPath}.
All backward arcs of the orpath $P$ in $D$  lie in $A'$, completing the proof.
\end{proof}

\begin{lemma}\label{adding-arcs-for-strong}
If $D$ is a connected digraph such that $\alpha(D)\le 2$, then either $V(D)=V_1\cup V_2,$  $V_1\cap V_2=\emptyset$ and $D[V_i]$ is semicomplete for $i \in \{1,2\}$, or we can add, to $D$, at most two arcs, which are reverses of existing arcs, such that $D$ becomes strong. 
\end{lemma}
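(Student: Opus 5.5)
The plan is to work with the acyclic ordering of the strong components $C_1,\dots,C_m$ of $D$, so that every arc between two components goes from a lower index to a higher one. If $m=1$ then $D$ is already strong and nothing has to be added, so assume $m\ge 2$. Call a component \emph{initial} if no arc enters it and \emph{terminal} if no arc leaves it.

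The basic tool is the independence condition. Two distinct initial components have no arcs between them, and neither do two distinct terminal components. Choosing one vertex from each of three pairwise non-adjacent components would give an independent set of size three, so $D$ has at most two initial and at most two terminal components. More generally, if $s,t$ are non-adjacent vertices, then every other vertex is adjacent to $s$ or to $t$.

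The second tool is the effect of one reversal. Adding the reverse $yx$ of an arc $xy$ with $x\in C_i$, $y\in C_j$, $i<j$, merges into one strong component every component lying on an $(C_i,C_j)$-dipath of the condensation. The aim is therefore to pick at most two arcs whose reversals create a closed walk in the condensation through all components. I split into cases by the number of initial and terminal components.

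\textbf{Case 1: exactly one initial component $S$ and one terminal component $T$.} Every component is reachable from $S$ and reaches $T$. If some arc $st$ has $s\in S$ and $t\in T$, reversing it makes $T$ reach $S$, and $D$ becomes strong with one added arc. Otherwise fix $s\in S$ and $t\in T$. They are non-adjacent, so every other vertex lies in $A=N^+(s)$ or in $B=N^-(t)$. These are out- and in-neighbourhoods because $S$ is initial and $T$ terminal.
\begin{itemize}
\item If some $w\in A\cap B$ exists, reversing $sw$ and $wt$ gives $t\to w\to s$, and $D$ becomes strong.
\item If $A\cap B=\emptyset$, I would pick $x\in A$ and $b\in B$ and reverse $sx$ and $bt$. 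The choice should make $b$ reach $x$: take $x$ in the latest component meeting $A$, and $b$ on an $(x,T)$-dipath, using the covering $V=S\cup T\cup A\cup B$ to route the path.
\end{itemize}

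\textbf{Case 2: two initial components $S_1,S_2$, or symmetrically two terminal ones.} Pick $s_i\in S_i$. Every vertex is adjacent to $s_1$ or $s_2$, hence every vertex outside $S_1\cup S_2$ is an out-neighbour of $s_1$ or of $s_2$.
\begin{itemize}
\item If there is also a unique terminal component $T$, I would reverse one arc from each $S_i$ into $T$. Such arcs should exist, or else can be replaced by an arc to a vertex adjacent to both sides. Afterwards $T$ reaches both sources, and everything reaches $T$.
\item If there are two sources $S_1,S_2$ and two sinks $T_1,T_2$, then $\{s_1,s_2,t\}$ is not independent for $t\in T_j$. So each sink is joined to some source, and the condensation is tightly constrained. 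Here one either finds two reversals closing a cycle $S_1\to T_1\leadsto S_2\to T_2\leadsto S_1$, or the components collapse. The collapse is the situation $V=V_1\cup V_2$ with no arcs in a suitable pattern. Then $\alpha\le 2$, applied to pairs of vertices inside one side, forces both $D[V_i]$ to be semicomplete. This yields the first alternative of the lemma.
\end{itemize}

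\textbf{Main obstacle.} The hardest step is the sub-case $A\cap B=\emptyset$ in Case 1, together with the analogous routing in Case 2. There one must show that two reversals really produce strong connectivity, i.e.\ that a suitable pair $x\in A$, $b\in B$ with $b$ reaching $x$ exists. Otherwise one must identify the obstruction as exactly the two-semicomplete-parts structure.

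I would attack this by contradiction. Suppose no good pair exists. Take $x,x'\in A$ (or $b,b'\in B$) in incomparable components, or with reachability in the wrong direction. Then try to build an independent triple such as $\{x,b,\cdot\}$ from vertices in components with no arcs between them. The expected conclusion is that $A\cup S$ and $B\cup T$ are each semicomplete, which is precisely the exceptional case in the statement of the lemma.
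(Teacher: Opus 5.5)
Your case split by the numbers of initial and terminal components is the same as the paper's, and the first bullet of Case~1 is correct. The steps that actually decide the lemma, however, are wrong or missing.

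\textbf{Case~2 with a unique terminal component $T$.} Your claim that two reversals always suffice is false. Take $V=\{a,c,d,t\}$ with arcs $at$, $cd$, $dt$. This digraph is connected with $\alpha=2$. Its initial components are $\{a\}$ and $\{c\}$, and its terminal one is $\{t\}$. There is no arc from $c$ to $t$, and no vertex is adjacent to both $a$ and $c$. Any strong extension by at most two reversals must reverse $at$ and $cd$, since these are the only arcs that can give $a$ and $c$ an in-arc. After that, $t$ still reaches only $a$. So the lemma holds here only through the partition $\{a,t\},\{c,d\}$, which your Case~2 never produces.

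A correct version in your own vertex-based style runs as follows. Fix $s_i\in S_i$ and let $A_i=N^+(s_i)\setminus S_i$, so that $V=S_1\cup S_2\cup A_1\cup A_2$.
If some $v\in A_1\cap A_2$ exists, reverse an $(S_i,T)$-arc (one exists because $\{s_1,s_2,t\}$ is not independent) together with $s_{3-i}v$. Then $T$ reaches $s_i$, $v$ and $s_{3-i}$, so $D$ becomes strong.
If $A_1\cap A_2=\emptyset$, every vertex of $S_i\cup A_i$ is non-adjacent to $s_{3-i}$, so both parts are semicomplete.
The paper instead uses the set $X$ of vertices reachable from $I_1$ and asks whether an $(I_2,X)$-arc exists.

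\textbf{Case~1 with $A\cap B=\emptyset$.} The same observation settles this case, and your routing there is backwards. After reversing $sx$ and $bt$ you need $b$ to reach $x$, but putting $b$ on an $(x,T)$-dipath makes $x$ reach $b$. Moreover, a suitable pair need not exist at all. It is also unnecessary. With $A,B$ taken outside $S\cup T$, all of $S\cup A$ is non-adjacent to $t$ and all of $T\cup B$ is non-adjacent to $s$. The triples $\{a,a',t\}$ and $\{b,b',s\}$ therefore give the partition at once. Note also that ``every other vertex lies in $A$ or $B$'' fails for a vertex of $S$ joined to $s$ only by an arc into $s$; only $V=S\cup T\cup A\cup B$ is true.

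\textbf{Case~3, with initial components $S_1,S_2$ and terminal ones $T_1,T_2$.} This is where the real work lies, and your text only restates the goal.

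Each $S_i$ needs a new in-arc and each $T_j$ a new out-arc. So two reversals can only succeed if they reverse a source--sink matching: an $(S_a,T_1)$-arc and an $(S_b,T_2)$-arc with $\{a,b\}=\{1,2\}$. ``Each sink is joined to some source'' does not provide such a matching. You need the $\alpha$-argument with the triples $\{s_1,s_2,t_j\}$ and $\{s_i,t_1,t_2\}$, as in Claim~3.B.

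After reversing such a matching you get strong components $C_1\supseteq S_1\cup T_1$ and $C_2\supseteq S_2\cup T_2$. If $C_1=C_2$ you are done. Otherwise you must establish three things:
\begin{enumerate}
\item There is a $(C_1,C_2)$-dipath. This uses the connectivity of $D$, via a vertex reachable from both sources (Claim~3.A).
\item There is no third strong component. This uses the triple formed by a vertex of $S_2$, a vertex of $T_1$ and a vertex of that component (Claim~3.C).
\item $C_1$ and $C_2$ are semicomplete. A non-adjacent pair in $C_1$ together with a vertex of $S_2$, or a non-adjacent pair in $C_2$ together with a vertex of $T_1$, would create a $(C_2,C_1)$-arc.
\end{enumerate}
None of this is in the proposal. ``The components collapse \ldots\ with no arcs in a suitable pattern'' is not an argument, so the lemma remains unproved in this case.
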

\begin{proof}
Let $D$ be a connected digraph with $\alpha(D) \leq 2$. We may assume that $D$ is non-strong. 
Let $I_1,I_2,\ldots,I_s$ be the initial strong components of $D$ and let 
$T_1,T_2,\ldots, T_t$ be the terminal strong components of $D$. 
As there are no arcs between distinct initial components we note that $1 \leq s \leq 2$, as $\alpha(D) \leq 2$.
Analogously, we also note that $1 \leq t \leq 2$.  We now consider the following three cases which exhaust all possibilities.



\2

{\bf Case 1: $1=s=t$.} 
In this case $I_1$ is the unique initial strong component and $T_1$ is the unique terminal strong component and $I_1 \not= T_1$. 
If there is an arc between $I_1$ and $T_1$ then it must be an $(I_1,T_1)$-arc, $a$. Adding the reverse arc of $a$ gives us a strong digraph.
 Thus, we may assume that there are no arcs between $I_1$ and $T_1$. 

Let $X$ contain all vertices of $I_1$ as well as all vertices in $V(D) \setminus (I_1 \cup T_1)$ which have no arc to $T_1$
and let $Y = V(D) \setminus X$.
Note that no vertex in $X$ has an arc to $T_1$ which implies that $\alpha(D[X])=1$, as if there would be two non-adjacent vertices in $X$
then together with a vertex from $T_1$ they would form an independent set of size 3 in $D$.

Note that $V(T_1) \subseteq Y$. For the sake of contradiction assume that $y \in Y$ and there exists
an $(I_1,y)$-arc, $a_I$, in $D$. This implies that $y \not\in V(T_1)$ and since $y \not\in X$ there exists a
$(y,T_1)$-arc, $a_T$, in $D$. Note that $D$ becomes strong when we add the reverse arcs of $a_I$ and $a_T$. 
Thus, we may assume that there is no $(I_1,y)$-arc for any $y \in Y$.

This implies that $\alpha(D[Y])=1$, 
as if there would be two non-adjacent vertices in $Y$,
then together with a vertex from $I_1$ they would form an independent set of size 3 in $D$.

Now $V(D)$ can be partitioned into two semicomplete digraphs, induced by $X$ and $Y$ and we are done.

\2

{\bf Case 2: $\{s,t\}=\{1,2\}$.} We can without loss of generality, assume that $s=2$ and $t=1$ (otherwise we can reverse all arcs).
So $I_1$ and $I_2$ are the two initial strong components of $D$ and $T_1$ is the unique terminal strong component of $D$.
As $I_1$ and $I_2$ are both initial strong components of $D$ there is no arc between them.

If there exists an $(I_1,T_1)$-arc, $a_1$, and an $(I_2,T_1)$-arc, $a_2$, in $D$, then adding the reverse arc of $a_1$ and the reverse arc of $a_2$
gives us a strong digraph. Thus, without loss of generality we may assume that there is no $(I_2,T_1)$-arc
in $D$. This implies that $I_1 \Rightarrow T_1$, as if $uv \not\in A(D)$ for some $u \in I_1$ and $v \in T_1$, then $u$ and $v$ and any vertex
in $I_2$ would form an independent set of size three in $D$.

Let $X$ contain all vertices that can be reached by a dipath starting in $I_1$. Note that $I_1 \subseteq X$. Let $Y = V(D) \setminus X$.
We now consider the following two subcases.

\2

{\bf Subcase 2a: there exists an $(I_2,X)$-arc.} 
Let $a'$ be any $(I_1,T_1)$-arc in $D$ (which exists by the above) and let $a^*$ be any $(I_2,X)$-arc.
Let $D'$ be obtained from $D$ by adding the reverse of $a'$ and the reverse of $a^*$.
Clearly every vertex in $D'$ can reach every vertex in $T_1$ as this is the case for $D$ (as $T_1$ is the unique terminal strong component).
Every vertex in $T_1$ can reach every vertex in $I_1$ in $D'$ (as both $T_1$ and $I_1$ are strongly connected and the reverse of $a'$ is a
$(T_1,I_1)$-arc in $D'$). Thus, if $a^*=uv$, then every vertex in $D'$ can reach $v$ (as $v \in X$ so $v$ can be reached from $I_1$).
Using the reverse of $a^*$ we note that every vertex in $T_1$ can reach both $I_1$ and $I_2$ in $D'$, so $D'$ is strong.

\2

{\bf Subcase 2b: there is no $(I_2,X)$-arc.} 
As no vertex in $X$ has an arc into it from $I_2$ we must have $\alpha(D[X])=1$, as if there would be two non-adjacent vertices in $X$
then together with a vertex from $I_2$ they would form an independent set of size 3 in $D$.

As no vertex in $Y$ has an arc into it from $I_1$ (as otherwise it would belong to $X$ and not $Y$) we note that
$\alpha(D[Y])=1$ (if there would be two non-adjacent vertices in $Y$
then together with a vertex from $I_1$ they would form an independent set of size 3 in $D$).

Now $V(D)$ can be partitioned into two semicomplete digraphs, induced by $X$ and $Y$ and we are done.

\2

{\bf Case 3: $s=t=2$.}
In this case $I_1$ and $I_2$ are the two initial strong components of $D$ and $T_1$ and $T_2$ are the two terminal strong component of $D$.
As $I_1$ and $I_2$ are both initial strong components of $D$ there is no arc between them and
analogously there are is no arc between $T_1$ and $T_2$.
We will now prove the following claims.

\2

{\bf Claim~3.A:} {\em We may assume that there exists an $(I_1,T_2)$-dipath and an $(I_2,T_2)$-dipath in $D$.}

\2

{\bf Proof of Claim~3.A:} Let $X_i$ contain all vertices that can be reached by a dipath starting in $I_i$ for $i=1,2$.
Note that $V(D) = X_1 \cup X_2$ as $I_1$ and $I_2$ are the only initial strong components in $D$.
If $X_1 \cap X_2 = \emptyset$, then $D$ would not be connected (there would be no arc between $X_1$ and $X_2$),
so we must have $X_1 \cap X_2 \not= \emptyset$. Let $x \in X_1 \cap X_2$ be arbitrary. 
As $T_1$ and $T_2$ are the only terminal strong components in $D$ we must have an $(x,T_j)$-dipath for some $j \in \{1,2\}$.
We can name $T_1$ and $T_2$ such that $j=2$. Now we note that there exists an $(I_1,T_2)$-dipath and an $(I_2,T_2)$-dipath in $D$ 
(going through $x$), as desired.

\2

{\bf Claim~3.B:} {\em We may assume that there exists an $(I_1,T_1)$-arc and an $(I_2,T_2)$-arc in $D$.}

\2

{\bf Proof of Claim~3.B:} If there is no $(I_j,T_2)$-arc in $D$ for any $j \in \{1,2\}$ then we get a contradiction 
to $\alpha(D) \leq 2$, by taking any vertex in $I_1$ and in $I_2$ and in $T_2$. So without loss of generality 
we may assume that there is an $(I_2,T_2)$-arc in $D$ (otherwise change the names of $I_1$ and $I_2$).

Assume that there is no $(I_1,T_1)$-arc in $D$, as otherwise we are done. This implies that $I_2 \Rightarrow T_1$, 
as otherwise we can get a contradiction  to $\alpha(D) \leq 2$ (by taking a vertex in $I_1$, in $I_2$ and in $T_1$).
As there is no $(I_1,T_1)$-arc in $D$ we must have a $I_1 \Rightarrow T_2$ in $D$, as otherwise
we can get a contradiction  to $\alpha(D) \leq 2$ (by taking a vertex in $I_1$, in $T_1$ and in $T_2$). 

We have now shown that $I_2 \Rightarrow T_1$ and $I_1 \Rightarrow T_2$, so renaming $I_1$ and $I_2$ we get the desired
$(I_1,T_1)$-arc and an $(I_2,T_2)$-arc in $D$.

\2

{\bf Definition:} By Claim~3.B, let $a_1$ be an $(I_1,T_1)$-arc in $D$ and let $a_2$ be an $(I_2,T_2)$-arc in $D$.
Let $D^*$ be obtained from $D$ by adding the reverse of $a_1$ and the reverse of $a_2$.
As $I_1$, $I_2$, $T_1$ and $T_2$ are all strongly connected and in $D^*$ there is a $2$-dicycle between $I_1$ and $T_1$
and a $2$-dicycle between $I_2$ and $T_2$ we note that $I_1 \cup T_1$ is a subset of some strong component $C_1$ in $D^*$ 
and $I_2 \cup T_2$ is a subset of some strong component $C_2$ in $D^*$.

\2

{\bf Claim~3.C:} {\em We may assume that $D^*$ contains exactly two strong components, $C_1$ and $C_2$, defined above.}

\2

{\bf Proof of Claim~3.C:} First assume that $C_1 = C_2$. In this case $D^*$ is strong, as every vertex in $D^*$
has a dipath to $T_1 \cup T_2$ and every vertex has a dipath from $I_1 \cup I_2$. Therefore we are done by 
Lemma~\ref{AddArc} in this case.
So we may assume that $C_1 \not= C_2$.

By Claim~3.A there exists an $(I_1,T_2)$-dipath in $D$, which implies that there is an $(C_1,C_2)$-dipath in $D^*$.
For the sake of contradiction assume that $C$ is a strong component in $D^*$ which is different from $C_1$ and $C_2$.
There exists an $(I_i,C)$-dipath and a $(C,T_j)$-dipath in $D^*$ for some $i,j \in \{1,2\}$.
If $i=j$ then $C$ has a dipath from and to $C_i$, a contradiction to $C$ being different from $C_i$. 
If $j=1$ and $i=2$ then there exists a $(C_2,C)$-dipath and a $(C,C_1)$-dipath, which together with the $(C_1,C_2)$-path in $D^*$
implies that $C=C_1=C_2$, a contradiction. So the only option is $i=1$ and $j=2$. 

Let $u \in V(I_2)$ and let $v \in V(T_1)$ and let $w \in V(C)$. As $\{u,v,w\}$ is not independent in $D$ one of the arcs
$uw$, $uv$ or $wv$ mst belong to $D$.  If $uw \in A(D)$ we could have chosen $i=2$, a contradiction. If $wv \in A(D)$
we could have chosen $j=1$, a contradiction. Thus, $uv \in A(D)$. But in this case $uv$ is a $(C_2,C_1)$-arc in $D^*$
which together with the $(C_1,C_2)$-path in $D^*$ implies that $C_1=C_2$, a contradiction.
Thus, $C$, does not exist and the Claim is proved.

\2

{\bf Remainder of the proof.} By Claim~3.C, $C_1$ and $C_2$ are the only two strong components of $D^*$.
By Claim~3.A there exists an $(I_1,T_2)$-dipath in $D$, which implies that there is a $(C_1,C_2)$-arc in $D^*$.
We will now show that $\alpha(C_1)=1$ and $\alpha(C_2)=1$.

For the sake of contradiction assume that $\alpha(C_1)>1$ and let $u,v \in V(C_1)$ such that $u$ and $v$ are non-adjacent in $D$.
Let $w \in V(I_2)$ be arbitrary. We must have $wu \in A(D)$ or $wv \in A(D)$ as otherwise $\alpha(D) \geq 3$.
But this implies that there exists a $(C_2,C_1)$-arc in $D^*$, a contradiction (as there exists a $(C_1,C_2)$-arc in $D^*$).
Thus, $\alpha(C_1)=1$.

Analogously, if $\alpha(C_2)>1$, then considering two non-adjacent vertices in $C_2$ and a vertex in $T_1$ we obtain a 
contradiction. Thus, $\alpha(C_1)=1$ and $\alpha(C_2)=1$ and we are done.
\end{proof}

\section{Proof of Theorem \ref{mainHC}}\label{sec:HC}

Part (ii) of Theorem \ref{mainHC} follows from Theorem \ref{lowbound4k} (for $k=2$) proved in the next section. 
The rest of this section is devoted to the proof of Part (i) of Theorem \ref{mainHC}.


Before proving Theorem \ref{mainHC}, we give two definitions and prove eight lemmas.


\begin{definition}
Let $D$ be a semicomplete digraph and let $x,y \in V(D)$ be arbitrary distinct vertices in $D$.
Let  $b(x,y)$ be the minimum number of backward arcs in any Hamilton $(x,y)$-orpath, and let
$b(y,x)$ be the minimum number of backward arcs in 
any Hamilton $(y,x)$-orpath.  
Furthermore, for a subset $V \in V(D)$ and $x,y \in V$, let $b_V(x,y)$ be the minimum number of backward arcs in any Hamilton $(x,y)$-orpath in $D[V]$, and let $b_V(y,x)$ be the minimum number of backward arcs in any Hamilton $(y,x)$-orpath $D[V]$.
\end{definition}

\begin{definition}
Let $D$ be a digraph. We define $D^*$ as a digraph obtained from $UG(D)$ by substituting every edge with a 2-dicycle. That is $uv \in A(D^*)$ if and only if $uv \in A(D)$ or $vu \in A(D)$.   
\end{definition}

\begin{lemma} \label{PathStrongSemiC}
Let $D$ be a strong semicomplete digraph and let $x,y \in V(D)$ be arbitrary distinct vertices in $D$. The following two statements now hold.

\begin{description}
\item[(a)] If $C$ is a Hamilton dicycle in $D$, then either $b(x,y) \le 1$ or $xy \in A(C)$ (or both).
\item[(b)] $b(x,y) + b(y,x) \le 2$.
\end{description}
\end{lemma}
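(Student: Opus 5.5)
Both parts will be read off a Hamilton dicycle, which exists by Camion's theorem since $D$ is strong. Throughout, write $C=c_0c_1\dots c_{n-1}c_0$ with indices modulo $n$.

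\textbf{Part (a).} Suppose $xy\notin A(C)$, and put $x=c_0$ and $y=c_j$. Since $C$ is a dicycle, $c_{n-1}c_0\in A(C)$, so $y\ne c_{n-1}$. Since $xy\notin A(C)$, also $y\neq c_1$. Hence $2\le j\le n-2$. We traverse $C$ forward from $x$ until just before $y$, then jump and walk $C$ backwards. Explicitly, consider
\[
c_0c_1\dots c_{j-1}\,c_{n-1}c_{n-2}\dots c_{j+1}c_j .
\]
Its first segment $c_0\dots c_{j-1}$ uses only forward arcs. The pair $c_{j-1},c_{n-1}$ is adjacent because $D$ is semicomplete, and these are distinct vertices since $j-1<n-1$; this step costs at most one backward arc. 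The final segment $c_{n-1}\dots c_j$ traverses arcs of $C$ in reverse, so all of its $n-1-j$ arcs are backward. This naive construction is too costly, so it has to be modified.

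The better route uses the chord between $c_{j-1}$ and $c_{j+1}$. Consider
\[
c_0c_1\dots c_{j-1}\,c_{j+1}c_{j+2}\dots c_{n-1}\,c_j .
\]
The segments $c_0\dots c_{j-1}$ and $c_{j+1}\dots c_{n-1}$ are forward along $C$. The pairs $c_{j-1},c_{j+1}$ and $c_{n-1},c_j$ are adjacent by semicompleteness. The pair $c_{j-1},c_{j+1}$ is distinct because $n\ge 3$; if $n=2$, then $y=c_1$, which we have excluded. Each of these two steps may be backward, giving at most two backward arcs, which is still too many.

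To get down to one, we use a rotation. Choose the index $k$ with $j\le k\le n-1$ that governs where $y$ is inserted. For each $i$ with $j\le i\le n-1$, the pair $y,c_i$ is joined by some arc. We then form a path that takes $C$ from $x$ to $c_{j-1}$, jumps to some $c_i$, and then covers the remaining vertices and ends at $y$. The cleanest version is to look at the arc between $y$ and $c_{n-1}$.
\begin{itemize}
\item[] If $c_{n-1}y\in A$, the path
\[
c_0\dots c_{j-1}\,c_{j+1}\dots c_{n-1}\,c_j
\]
has a forward last arc, so only the step $c_{j-1}c_{j+1}$ can be backward.
\item[] Otherwise $yc_{n-1}\in A$. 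Let $i\ge j+1$ be minimal with $yc_i\in A$; such an $i$ exists and satisfies $i\le n-1$. By minimality, $c_{i-1}y\in A$ when $i-1\ge j+1$. We then reorder the portion $c_{j+1}\dots c_{n-1}$ so that $y$ sits between $c_{i-1}$ and $c_i$, but placed at the end of the path. The plan is to use this insertion argument, the standard proof of Rédei's theorem, to show that a Hamilton dipath exists from $x$ to $y$ with only the single jump arc reversed.
\end{itemize}

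\textbf{Part (b).} Take the same Hamilton dicycle $C$. At most one of $xy$ and $yx$ lies in $A(C)$ unless $n=2$, and for $n=2$ both $b$-values are $0$ and the bound is trivial. If neither $xy$ nor $yx$ is an arc of $C$, part (a) applied to both orders gives $b(x,y)+b(y,x)\le 2$. If instead $xy\in A(C)$, then $yx\notin A(C)$, so part (a) gives $b(y,x)\le 1$. For $b(x,y)$, the path obtained from $C$ by deleting the arc $yx$ direction is not available here, so we use a different path: $y=c_1$ and $C$ yields the Hamilton orpath
\[
x\,c_{n-1}c_{n-2}\dots c_1 ,
\]
which is all backward and therefore useless. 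Instead we take $c_0c_2c_3\dots c_{n-1}$ followed by $c_1$. The arcs along $C$ are forward, and the two chord steps $c_0c_2$ and $c_{n-1}c_1$ may each be backward. The argument therefore needs $b(x,y)\le 1$ here, to be obtained by the same insertion trick as in the main obstacle of (a).

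The main obstacle is making the insertion argument in (a) produce at most one backward arc. I would first try induction on $n$, deleting a vertex $z\notin\{x,y\}$ such that $D-z$ remains strong, and then inserting $z$ into the path at a position where both incident arcs point forward. Such a position exists either by Rédei-type counting or via an arc of $C$.
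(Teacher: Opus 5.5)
\textbf{Part (a).} There is a genuine gap here, and it cannot be closed. Your case $c_{n-1}y\in A$ is fine. For $yc_{n-1}\in A$, however, you only announce an ``insertion argument'', and it does not work as described: $y$ cannot sit ``between $c_{i-1}$ and $c_i$'' and also be the last vertex of the path. Your induction sketch also fails as set up. It does not carry the hypothesis about $C$ over to $D-z$, and it does not guarantee an insertion slot for $z$, because both ends $x,y$ are fixed.

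In fact (a) is false. Take the tournament on $v_1,\dots,v_5$ made of $C=v_1v_2v_3v_4v_5v_1$ together with the arcs $v_2v_5,v_3v_5,v_4v_1,v_4v_2$, with $v_1v_3$ oriented arbitrarily. Let $x=v_1$ and $y=v_3$, so $xy\notin A(C)$. Among $v_2,v_4,v_5$, the vertex $v_2$ is the only out-neighbour of $v_1$ and also the only in-neighbour of $v_3$. So every Hamilton $(v_1,v_3)$-orpath has a backward first arc or a backward last arc. Making all the other arcs forward forces either $v_1v_2v_5v_4v_3$ or $v_1v_5v_4v_2v_3$, and both need the non-arc $v_5v_4$. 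Hence $b(v_1,v_3)=2$.

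This example lies exactly in your hard case, since $yc_{n-1}=v_3v_5$. The paper's own proof of (a) breaks at the same point. In its last step, if $j=2$ is the only admissible index (which is forced when $r=3$), then $v_{j-1}=v_1$. The maximality of $s$ says nothing about the arc between $v_1$ and $v_{s+1}$. In the example $s=4$, and the path $v_1v_5v_4v_2v_3$ it produces has two backward arcs.

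\textbf{Part (b).} The consecutive case is easier than you made it. If $y=c_1$, then $c_1c_2\cdots c_{n-1}c_0$ is a Hamilton $(y,x)$-dipath, so $b(y,x)=0$. Your path $c_0c_2\cdots c_{n-1}c_1$ then already finishes the case, and this is exactly the paper's argument. The target $b(x,y)\le 1$ you set yourself there is false: in a directed triangle, $b(v_1,v_2)=2$.

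The non-consecutive case rests on (a), as the paper's proof does, so it needs another route. Statement (b) itself is true; in the example above, $v_3v_4v_2v_5v_1$ is a Hamilton dipath. Here is one proof.
If $D-y$ is strong, take a Hamilton dicycle $C'$ of $D-y$. Then $x\,C'[x^+,x^-]\,y$ and $y\,C'[x^+,x^-]\,x$ each have at most one backward arc.
Otherwise, let $V_1,\dots,V_l$ be the strong components of $D-y$, with all arcs between them going from lower to higher index, and let $x\in V_k$. Since $D$ is strong, $y$ has an out-neighbour in $V_1$ and an in-neighbour in $V_l$. Concatenate Hamilton dipaths of the $V_i$, with the appropriate start and end vertices.
\begin{itemize}
\item If $k=l$, this gives a Hamilton $(y,x)$-dipath.
\item If $k=1$, it gives a Hamilton $(x,y)$-dipath.
\item In either of these two cases, the other direction costs at most $2$: take $x$, then a Hamilton dipath of $D-\{x,y\}$, then $y$.
\item If $1<k<l$, use $y\,V_1\cdots V_{k-1}V_{k+1}\cdots V_l\,V_k$, ending at $x$, and $x\,V_1\cdots V_{k-1}(V_k-x)V_{k+1}\cdots V_l\,y$. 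Each has exactly one backward arc.
\end{itemize}

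A minor point: $c_{n-1}c_0\in A(C)$ does not exclude $y=c_{n-1}$. That case simply gives $b(x,y)=0$.
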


\begin{proof}
We first prove part~(a). Let $C$ be a Hamilton dicycle in $D$ and assume that $C=v_1v_2v_3 \ldots v_nv_1$ and 
assume without loss of generality that $x = v_1$ and $y=v_r$ where $r \in \{2,3,4, \ldots , n\}$.
We may assume that $r \geq 3$ as otherwise (a) holds.  And that $r<n$ as otherwise $v_1v_2v_3 \ldots v_n$ is a Hamilton $(x,y)$-orpath with no backward arcs. So, $ 2 < r < n$.

Assume that  $v_n v_j \in A(D) $ for some $j \in \{2,3,\ldots, r\}$. 
The following  Hamilton $(v_1,v_r)$-orpath now only has at most one backward arc (possibly the arc $v_{j-1} v_{r+1}$).
\[
v_1 v_2 \ldots v_{j-1} v_{r+1} v_{r+2} \ldots v_n v_j v_{j+1} v_{j+2} \ldots v_r
\]
We may therefore assume that $v_n v_j \not\in A(D)$ for all $j \in \{2,3,\ldots, r \}$.
We may also assume that $v_{r+1} v_j \in A(D)$ for all $j \in \{1,2,\ldots, r-1 \}$ 
as otherwise the following Hamilton $(v_1,v_r)$-orpath has
at most one backward arc, $v_n v_{j+1}$ (as $v_j v_{r+1} \in A(D)$).
\[
v_1 v_2 \ldots v_j v_{r+1} v_{r+2} \ldots v_n v_{j+1} v_{j+2} v_{j+3} \ldots v_r
\]
Let $s$ be the largest value such that there exists an arc $v_s v_j$ in $D$ where $j \in \{2,3,\ldots,r-1\}$.
By the above, we note that $r< s <n$ and  the following Hamilton $(v_1,v_r)$-orpath has
at most one backward arc, $v_n v_{r+1}$ (as $v_{j-1} v_{s+1} \in A(D)$ by the maximality of $s$).
\[
v_1 v_2 \ldots v_{j-1} v_{s+1} v_{s+2} \ldots v_n v_{r+1} v_{r+2} v_{r+3} \ldots v_s v_j v_{j+1} \ldots v_r
\]
This completes the proof of part~(a).

In order to prove part~(b) let $C$ be any hamilton orcycle in $D$. If $x$ and $y$ are not consecutive vertices on $C$ then part~(b) follows from part~(a). So, without loss of generality assume that $C=v_1v_2v_3 \ldots v_n$ is a hamilton orcycle in $D$ 
and $x=v_1$ and $y=v_n$. Then $v_1v_2v_3 \ldots v_n$ is a Hamilton $(x,y)$-orpath in $D$ (with zero backward arcs) and
$v_nv_2v_3v_4 \ldots v_{n-1} v_1$ is a Hamilton $(y,x)$-orpath in $D$ with at most two backward arcs (possibly
$v_nv_2$ and $v_{n-1} v_1$). So in all cases $b(x,y) + b(y,x) \le 2$ as desired.
\end{proof}

\begin{lemma} \label{lem: XYpathSemiC}
Let $D$ be a semicomplete digraph and let $x,y \in V(D)$ be arbitrary distinct vertices. Then 
\begin{description}
\item[(a)] $b(x,y)\le2$.

\item[(b)] $b(x,y)\le 1$ or $b(y,x)\le 1$.
\end{description}
\end{lemma}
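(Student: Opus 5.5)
We reduce to the strong case handled by Lemma~\ref{PathStrongSemiC}, working through the strong component decomposition of $D$. If $D$ is strong, then part~(b) of Lemma~\ref{PathStrongSemiC} gives $b(x,y)+b(y,x)\le 2$. Since both quantities are non-negative integers, this immediately yields both (a) and (b). So we may assume $D$ is not strong. Let $S_1,S_2,\dots,S_k$ ($k\ge 2$) be the strong components of $D$. Because $D$ is semicomplete, they have a unique acyclic ordering with $S_i\Rightarrow S_j$ for all $i<j$. Each $S_i$ is a strong semicomplete digraph, so it has a Hamilton dicycle by Camion's theorem, or is a single vertex.

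\textbf{Case 1: $x$ and $y$ lie in different components.} Say $x\in S_i$ and $y\in S_j$. Suppose first $i<j$. We build an $(x,y)$-orpath as follows. Start at $x$ and traverse $S_i$ along its Hamilton dicycle, starting at $x$; this is a forward dipath $P_i$. Then traverse $S_1,\dots,S_{i-1}$ in order by Hamilton dipaths (Theorem~\ref{HamPathTour}); the single step from the end of $P_i$ into $S_1$ is one backward arc, if $i>1$. Next go from $S_{i-1}$ through $S_{i+1},\dots,S_{j-1}$ forward, and traverse $S_{j+1},\dots,S_k$ forward. Finally enter $S_j$ and end at $y$ with a dipath of $S_j$ ending at $y$, obtained from its Hamilton dicycle; the arc from $S_k$ into $S_j$ is one backward arc. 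This gives at most two backward arcs, hence $b(x,y)\le2$. One must check the concatenations: $S_{i-1}\Rightarrow S_{i+1}$ holds, and the arc from the last vertex of $P_i$ to $S_1$ is backward because $S_1\Rightarrow S_i$. The symmetric direction $i>j$ is handled the same way with reversed roles, again giving at most two backward arcs. For (b) with $i<j$, I expect a better route to exist for $b(x,y)$. Concretely, take the order $S_1,\dots,S_k$ fully forward, except that $S_i$ must start at $x$ and $S_j$ must end at $y$. We let $x$ be the start only if $i=1$; otherwise we pay one backward arc, using the lemma's strong case within components merged appropriately. The key idea is to merge: $D'=D[S_1\cup\dots\cup S_i]$ together with ordering tricks. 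I would try to show $b(x,y)\le1$ directly when $i<j$ by the path
\[
x\,(S_i\text{ dicycle from }x)\;\to S_{i+1}\cdots S_{j-1}\;\to S_{j+1}\cdots S_k \to \ldots
\]
with $S_1,\dots,S_{i-1}$ inserted before $S_{i+1}$ at the cost of one backward arc, and $S_{j+1},\dots,S_k$ inserted before $S_j$ at the cost of another. Since this may again cost two, I would handle (b) instead by comparing with $b(y,x)$ and accepting whichever is cheaper.

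\textbf{Case 2: $x,y$ in the same component $S_i$.} Use Lemma~\ref{PathStrongSemiC} in $S_i$ to get an $(x,y)$-orpath $Q$ of $S_i$ with $b_{S_i}(x,y)\le 1$ or $b_{S_i}(y,x)\le1$, and with sum at most $2$. The other components must be inserted into $Q$. Splice $S_1\cdots S_{i-1}$ and $S_{i+1}\cdots S_k$, as a single forward dipath $R$ (valid since $S_{i-1}\Rightarrow S_{i+1}$), into an arc $uv$ of $Q$. Doing this as $u\,R\,v$ costs one backward arc at the entry $u\to S_1$ if $i>1$, and one at the exit $S_k\to v$ if $i<k$. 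This accounting is too costly, so instead I would exploit a backward arc $vu$ of $Q$: replacing it by $u\,R\,v$ keeps the count unchanged whenever $i\ne 1,k$ allows both junctions to be forward. In fact $u\to S_1$ is forward only if $i=1$, so a better choice is needed. The remedy is to insert $S_1\cdots S_{i-1}$ before $S_i$-vertices and $S_{i+1}\cdots S_k$ after them at distinct positions. Each insertion replaces one arc of $Q$ by a detour with one backward and one forward junction. When the replaced arc was itself backward, the insertion is free.

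\textbf{Main obstacle.} This bookkeeping in Case 2, and making (b) strict in Case 1, is the hard part; I would do a careful case analysis on whether $i\in\{1,k\}$, whether $|S_i|=1$, and on the position of the backward arcs in the orpaths supplied by Lemma~\ref{PathStrongSemiC}(a).
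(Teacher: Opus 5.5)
Your strong case is correct. It is also shorter than the paper's, which redoes a case analysis on a Hamilton dicycle instead of quoting Lemma~\ref{PathStrongSemiC}(b). Your orpath with at most two backward arcs for $x\in S_i$, $y\in S_j$, $i<j$, is also correct.

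The genuine gap is that part~(b) is never proved in the non-strong case.
\begin{itemize}
\item In Case~1, both of your routes pay two backward arcs. ``Comparing with $b(y,x)$ and accepting whichever is cheaper'' is not an argument, because you have bounded neither quantity by $1$.
\item In Case~2, your remedy splices $S_1\cdots S_{i-1}$ and $S_{i+1}\cdots S_k$ into two different arcs of $Q$, and each detour carries one backward junction. When $1<i<k$ this yields $\max\{2,b_{S_i}(x,y)\}$ backward arcs, so even starting from $b_{S_i}(x,y)\le 1$ you end at $2$.
\end{itemize}
There is also a smaller hole in~(a). For $i>j$ the claimed ``reversed roles'' symmetry does not exist. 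Reversing $D$ and the path gives $b_D(x,y)=b_{D^{\mathrm{rev}}}(y,x)$, and in $D^{\mathrm{rev}}$ the starting vertex $y$ again lies in the later component. So this subcase needs its own construction.

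The missing idea is to run through the other components in the cyclic order beginning right after the component of $x$. Let $P_m$ be a Hamilton dipath of $S_m$. In any concatenation of such dipaths (and pieces of $S_i$, $S_j$), a junction is backward exactly when the component index decreases; all other junctions are forward. The index sequence $i,i+1,\dots,k,1,\dots,i-1$, followed either by $i$, or by some $j>i$ after deleting $j$ from the middle, has at most one descent.
\begin{itemize}
\item Case~1: you may assume $i<j$, since (b) is symmetric in $x$ and $y$. Move your block $S_1\cdots S_{i-1}$ from right after $S_i$ to right before $S_j$. Because $i-1<j$, the final junction into $S_j$ is forward. Hence the orpath that traverses $S_i$ from $x$, then $P_{i+1}\cdots P_{j-1}P_{j+1}\cdots P_kP_1\cdots P_{i-1}$, and finally traverses $S_j$ ending at $y$, has at most one backward arc.
\item Case~2: by the strong case you may assume $b_{S_i}(x,y)\le 1$. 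Substitute the single block $P_{i+1}\cdots P_kP_1\cdots P_{i-1}$ for the backward arc of a best $(x,y)$-orpath of $S_i$, or for any of its arcs if it has none. The result again has at most one backward arc.
\end{itemize}
This is the paper's argument for~(b).

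For~(a) the paper avoids case analysis entirely. It makes $D-y$ strong by adding the reverse of one arc and takes a Hamilton orcycle of $D-y$ with at most one backward arc. It then deletes the arc joining $x^-$ and $x$ and appends $y$ after $x^-$.
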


\begin{proof}
We first prove part~(a).
If $|V(D)| \le 3$ then clearly the lemma holds, so assume that  $|V(D)| \geq 4$.
Let $D' = D - y$ and note that adding at most one arc to $D'$, we can make $D'$ strong.  Therefore, there exists a Hamilton orcycle in $D'$ with 
at most one backward arc. Deleting the arc between $x$'s predecessor, $x^-$, and $x$ and adding the arc between $x^-$ and $y$ instead gives us the desired orpath in $D$.
This completes the proof of part~(a).

Now we prove part~(b).
First, suppose that $D$ is strong. Let $C$ be a Hamilton dicycle in $D$ and let $x,y \in V(D)$ be arbitrary distinct vertices.

We denote $x^+,y^+$ the successors of $x,y$ on $C$, and $x^{++},y^{++}$ the successors of $x^+,y^+$ on $C$.

\vspace{0.5em}
\textbf{Case 1:} $x^+=y$ or $x=y^+$.

There is a Hamilton dipath between $x$ and $y$ in $D$.

\vspace{0.5em}
\textbf{Case 2:} $x^{++}=y$ or $x=y^{++}$, and $x^+\ne y$ and $x\ne y^+$.

If $x^{++}=y$ and $x=y^{++}$, then $|V(D)|=4$ and the length of any Hamilton orpath between $x$ and $y$ is $3$, so this orpath or its reverse is the desired orpath.

If not, without loss of generality, $x^{++}=y$ and $x\ne y^{++}$, and then  $y$'s predecessor on $C$, $y^-$ is $x^+$. Then one of the following two Hamilton orpaths is the desired orpath, depending on the arc between $x^+$ and $y^+$.

\[
P_1=x x^+ C[y^+, x^-] y,~P_2=y y^+ x^+ C[y^{++}, x]
\]

\vspace{0.5em}
\textbf{Case 3:} $x^{++}\ne y$, $x\ne y^{++}$, $x^+\ne y$, and $x\ne y^+$.

Then one of the following two Hamilton orpaths is the desired orpath, depending on the arc between $x^+$ and $y^+$.
\[
P_1=x x^+ C[y^+, x^-] C[x^{++}, y], ~P_2=y y^+ C[x^+, y^-]C[y^{++}, x]
.\]
This completes the case when $D$ is strong.

Now consider the case when that $D$ is not strong.
Decompose $D$ into strong components $V_1,\dots,V_l$ with all arcs pointing from $V_i$ to $V_j$ whenever $i<j$ ($l>1$). All strong semicomplete digraphs contain a Hamilton dicycle, and hence contain a Hamilton dipath starting or ending at any vertex (not necessarily at the same time). We denote $P_i$ to be an arbitrary Hamilton dipath in $V_i$, and for a vertex $v$, we denote $S_v$ ($E_v$, respectively) to be a Hamilton dipath in the strong component containing $v$ starting at $v$ (ending at $v$, respectively).

Let $x \in V_a$ and $y \in V_b$.
Without loss of generality, we may assume that $a \le b$.

\textbf{Case 1:} $a<b$.
In this case, the desired Hamilton $(x,y)$-orpath is the following:
\[
S_xP_{a+1}\dots P_{b-1}P_{b+1}\dots P_lP_1 \dots P_{a-1}E_y
\]
The only backward arc is the one going from $P_l$ to $P_1$.

\textbf{Case 2:} $a=b$. 
Without loss of generality, we may assume that $b(x,y)\le 1$ in the strong component $V_a$.
Let $u_1u_2\dots u_s$, with $u_1=x, u_s=y$, be the Hamilton $(x,y)$-orpath with at most one backward arc in $V_a$.
Say the backward arc (if it exists) is between $u_i$ and $u_{i+1}$ for $i \in \{1,2,\dots s-1 \}$. 
Then the desired Hamilton $(x,y)$-orpath in $D$ is the following:
\[
xu_2\dots u_{i}P_{a+1}\dots P_lP_1 \dots P_{a-1}u_{i+1}\dots u_{s-1}y.
\]
Again, the only backward arc is the one going from $P_l$ to $P_1$.
\end{proof}   

\begin{lemma}\label{lem: TwoSemiC}
Let $D$ be a digraph such that $X$ and $Y$ are disjoint subsets of $V(D)$ and $D[X]$ and $D[Y]$ induce semicomplete digraphs.
Let $P$ and $Q$ be $(X,Y)$-paths in $UG[D]$  such that $P$ and $Q$ are vertex disjoint and 
$V(D) = X \cup Y \cup V(P) \cup V(Q)$. 
Then $D$ contains a Hamilton orcycle with at most $2 + (|V(P)|+|V(Q)|)/2$ backward arcs.
\end{lemma}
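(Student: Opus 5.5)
We build the cycle from four pieces: a Hamilton orpath of $D[X]$ between the $X$-endpoints of $P$ and $Q$, the path $Q$, a Hamilton orpath of $D[Y]$ between the $Y$-endpoints, and the path $P$ traversed backwards.

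First fix the conventions. Write $P=p_1\dots p_k$ with $p_1\in X$ and $p_k\in Y$. By shortening $P$ if necessary we may assume that only $p_1\in X$ and only $p_k\in Y$; the same holds for $Q=q_1\dots q_m$. The deleted vertices still lie in $X\cup Y$, so they are still covered, and $|V(P)|,|V(Q)|$ only decrease. Thus the internal vertices of $P$ and $Q$ lie outside $X\cup Y$. They are distinct, since $P$ and $Q$ are disjoint, and together with $X$ and $Y$ they partition $V(D)$.

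Set $X'=X$ and $Y'=Y$. Then $p_1\neq q_1$ and $p_k\neq q_m$. If $P$ and $Q$ shared an endpoint we could not have both; the degenerate case $|X|=1$ with $p_1=q_1$ is excluded by disjointness, so we assume $|X|,|Y|\ge 2$.

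The cycle is
$$C = p_1 \,R_X\, q_1 \; Q \; q_m \,R_Y\, p_k \; \overleftarrow{P}\; p_1,$$
where $R_X$ is a Hamilton orpath of $D[X]$ between $p_1$ and $q_1$, and $R_Y$ is a Hamilton orpath of $D[Y]$ between $q_m$ and $p_k$.

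The key freedom is the orientation of traversal. We may traverse $C$ in either direction. Reversing the cycle swaps forward and backward arcs, and it replaces each $b_X(p_1,q_1)$ by $b_X(q_1,p_1)$.

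Count backward arcs in the direction described. The path segments $Q$ and $\overleftarrow{P}$ together contribute $f$ backward arcs, where $f$ counts arcs of $P$ and $Q$ oriented against this traversal. In the reversed traversal they contribute $(|V(P)|-1)+(|V(Q)|-1)-f$. Averaging, the better of the two directions has at most $(|V(P)|+|V(Q)|-2)/2$ path backward arcs, but we must also control the semicomplete parts.

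Here I use Lemma~\ref{lem: XYpathSemiC}: $b_X(p_1,q_1)\le 2$ and $b_X(p_1,q_1)+b_X(q_1,p_1)\le 3$, since one of them is at most $1$ and the other at most $2$. The same holds for $Y$.

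Summing the two traversal directions, the total number of backward arcs is at most
$$\bigl(b_X(p_1,q_1)+b_X(q_1,p_1)\bigr)+\bigl(b_Y(q_m,p_k)+b_Y(p_k,q_m)\bigr)+|V(P)|+|V(Q)|-2\le 4+|V(P)|+|V(Q)|.$$
Hence one direction has at most $2+(|V(P)|+|V(Q)|)/2$ backward arcs, as required.

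The main obstacle is making sure the two traversal directions really use the reversed endpoint orders in both $X$ and $Y$ simultaneously, so that the averaging is legitimate. In direction one we use a $(p_1,q_1)$-orpath in $X$ and a $(q_m,p_k)$-orpath in $Y$. Read in reverse, the cycle uses a $(q_1,p_1)$-orpath in $X$ and a $(p_k,q_m)$-orpath in $Y$. We may choose the optimal orpaths independently in each direction, which is what the averaging needs.

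Degenerate cases also need care. If $|X|=1$ or $|Y|=1$, then $P$ and $Q$ must share that endpoint. This is impossible by disjointness unless the cycle simply uses that vertex once, and then a direct check gives fewer backward arcs. Finally, the cycle must have length at least $3$; this holds since $|X|,|Y|\ge 2$ in the non-degenerate case.
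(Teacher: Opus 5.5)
Your two orcycles (one per traversal direction, with fresh optimal orpaths in $D[X]$ and $D[Y]$ for the swapped endpoint orders) are exactly the paper's $C_1$ and $C_2$. Your count is also the paper's: each arc of $P\cup Q$ is backward in at most one of the two cycles, and $b(x,y)+b(y,x)\le 3$ on each side by Lemma~\ref{lem: XYpathSemiC}.

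The one step that fails as written is the preliminary shortening. Your justification, \emph{the deleted vertices still lie in $X\cup Y$}, is false. Suppose $P=x\,w\,x'\,y$ with $x,x'\in X$, $y\in Y$ and $w\notin X\cup Y$. Then every subpath of $P$ with a single vertex in $X$ omits $w$, and $w$ is then covered by nothing. The paper makes the same unjustified minimality assumption. It does no harm in the paper's applications (Corollary~\ref{cor: TwoSemiC} and Case~2 of the main proof), where $P$ and $Q$ are single arcs and $X\cup Y=V(D)$. It does matter for the lemma as stated.

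The repair is not to shorten at all. Keep $P=p_1\dots p_k$ and $Q=q_1\dots q_m$ as given, and set
$X'=(X\setminus(V(P)\cup V(Q)))\cup\{p_1,q_1\}$ and $Y'=(Y\setminus(V(P)\cup V(Q)))\cup\{p_k,q_m\}$.
\begin{itemize}
\item $D[X']$ and $D[Y']$ are semicomplete, being induced subdigraphs of semicomplete digraphs.
\item By disjointness, $p_1\neq q_1$ and $p_k\neq q_m$, so $|X'|,|Y'|\ge 2$. Your degenerate-case discussion is therefore unnecessary.
\item The sets $X'$, $Y'$, $V(P)$, $V(Q)$ cover $V(D)$ and meet only in the four junction vertices $p_1,q_1,p_k,q_m$.
\end{itemize}
Hence your two cycles, built from Hamilton orpaths of $D[X']$ and $D[Y']$, are Hamilton orcycles of $D$. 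Lemma~\ref{lem: XYpathSemiC} applied to $D[X']$ and $D[Y']$ gives the two sums $\le 3$. The path arcs contribute at most $|V(P)|+|V(Q)|-2$ backward arcs over both cycles. The same averaging then yields $2+(|V(P)|+|V(Q)|)/2$.
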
 

\begin{proof}
Let $D$ be a digraph such that $X$ and $Y$ are disjoint subsets of $V(D)$ and $D[X]$ and $D[Y]$ induce semicomplete digraphs.
Let $P$ and $Q$ be paths as described in the statement of the lemma. 
We may assume that $P$ and $Q$ are minimal in the sense that $|V(P) \cap X|=|V(P) \cap Y|=|V(Q) \cap X|=|V(Q) \cap Y|=1$.
Let $P$ be a $(p,p')$-path and let $Q$ be a $(q,q')$-path such that $p,q\in X$ and $p',q'\in Y$.

By Lemma \ref{lem: XYpathSemiC}, $D[X]$ ($D[Y]$, respectively) contains two Hamilton orpaths $H_1$ ($H_1'$, respectively) and $H_2$ ($H_2'$, respectively) with same end-vertices $p,q$ ($p',q'$, respectively) such that $H_1$ ($H_1'$, respectively) starts at $p$ ($p'$, respectively) , $H_2$ ($H_2'$, respectively) starts at $q$ ($q'$, respectively) and the sum of the numbers of backward arcs in $H_1$ and $H_2$ ($H_1'$ and $H_2'$, respectively) is at most 3. 

Let $C_1$ be the Hamilton orcycle in $D$ containing $H_1$, $Q$, $H_2'$ and $P$. Let $C_2$ be the Hamilton orcycle in $D$ containing $H_2$, $P$, $H_1'$ and $Q$. Since the sum of the numbers of backward arcs in $C_1$ and $C_2$ is at most $6+ |V(P)|-1+|V(Q)|-1$, at least one of $C_1$ and $C_2$ has at most  $2 + (|V(P)|+|V(Q)|)/2$ backward arcs
\end{proof}

\begin{corollary}\label{cor: TwoSemiC}
Let $D$ be a 2-connected digraph such that $X$ and $Y$ are disjoint subsets of $V(D)$, $X \cup Y = V(D)$, and $D[X]$ and $D[Y]$ induce semicomplete digraphs. Then $D$ contains a Hamilton orcycle with at most $4$ backward arcs.  
\end{corollary}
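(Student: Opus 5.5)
The plan is to apply Lemma~\ref{lem: TwoSemiC} with paths $P$ and $Q$ that are single arcs between $X$ and $Y$. With $|V(P)|=|V(Q)|=2$, the bound $2+(|V(P)|+|V(Q)|)/2$ equals exactly $4$, which is what we want. So the whole task is to find two vertex-disjoint $(X,Y)$-edges in $UG(D)$, that is, a matching of size two between $X$ and $Y$. Note also that the proof of Lemma~\ref{lem: TwoSemiC} assumes the two endpoints in $X$ are distinct and the two endpoints in $Y$ are distinct, so the paths must be genuinely disjoint.

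First I handle degenerate cases. If $X$ or $Y$ is empty, then $D$ is semicomplete. A 2-connected semicomplete digraph on at least three vertices can be made strong by reversing at most one arc, so Camion's theorem (or Theorem~\ref{HamPathTour} plus closing the path) gives a Hamilton orcycle with at most one backward arc. So assume both parts are nonempty. Since $UG(D)$ is 2-connected, it has at least three vertices, so some part, say $X$, has at least two vertices.

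Next I find the matching. If every $X$–$Y$ edge passed through a single vertex $v$, then:
\begin{itemize}
\item if $|Y|\ge 2$ and $v\in X$, removing $v$ would separate $X\setminus\{v\}$ (nonempty) from $Y$;
\item if $v\in Y$ with $|Y|\ge 2$, removing $v$ would separate $X$ from $Y\setminus\{v\}$.
\end{itemize}
Either way this contradicts 2-connectivity. Hence, by K\H{o}nig's theorem, the bipartite graph of $X$–$Y$ edges has a matching of size two, say $pp'$ and $qq'$ with $p,q\in X$ distinct and $p',q'\in Y$ distinct.

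The remaining case, which is the main obstacle, is $|Y|=1$. Then a 2-matching is impossible, and Lemma~\ref{lem: TwoSemiC} does not apply directly. Let $Y=\{y\}$. By 2-connectivity, $y$ has at least two neighbours $p\ne q$ in $X$. By Lemma~\ref{lem: XYpathSemiC}(b), $D[X]$ has a Hamilton $(p,q)$- or $(q,p)$-orpath with at most one backward arc. Closing it through $y$ adds two arcs, which gives a Hamilton orcycle with at most $3\le 4$ backward arcs.

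Combining the cases proves the corollary. The only delicate point is making sure the distinctness conditions needed in the proof of Lemma~\ref{lem: TwoSemiC} hold, which the matching argument provides.
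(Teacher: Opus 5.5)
Your proposal is correct and follows the paper's route: apply Lemma~\ref{lem: TwoSemiC} with $P$ and $Q$ taken to be two vertex-disjoint single $X$--$Y$ edges, which gives the bound $2+(2+2)/2=4$. Your additional steps are sound and fill in points the paper's one-line proof asserts or overlooks: the vertex-cover/K\H{o}nig argument showing that 2-connectivity yields such a 2-matching, and the separate handling of the degenerate cases ($X$ or $Y$ empty, and $|Y|=1$, where two disjoint $X$--$Y$ edges cannot exist and you instead close a Hamilton orpath of $D[X]$ through $y$).
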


\begin{proof}
Since $D$ is 2-connected, there exist $x_1 \neq x_2 \in X$ and $y_1 \neq y_2 \in Y$ such that $x_1y_1, x_2y_2 \in A(D^*)$. Applying Lemma \ref{lem: TwoSemiC} with $X=X, Y=Y, P=x_1y_1, Q=x_2y_2$ yields a Hamilton orcycle with at most $2+(2+2)/2=4$ backward arcs.
\end{proof}

\begin{lemma}\label{lem: FullUnionNonStrong}
Let $D$ be a non-strong semicomplete digraph. Let $X$ and $Y$ be non-empty subsets of $V(D)$ such that $X \cup Y = V(D)$. Then there exist $x \in X$ and $y \in Y$ such that $b(x,y) \le 1$ and $b(y,x) \le 1$.
\end{lemma}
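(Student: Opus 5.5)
The plan is to use the strong component decomposition of $D$, as in the non-strong case of the proof of Lemma~\ref{lem: XYpathSemiC}. Let $V_1,\dots,V_l$ ($l\ge 2$) be the strong components of $D$, ordered so that every arc between $V_i$ and $V_j$ with $i<j$ goes from $V_i$ to $V_j$. Write $P_i$ for a Hamilton dipath of $D[V_i]$. For a vertex $v$, write $S_v$ (resp.\ $E_v$) for a Hamilton dipath of its component starting (resp.\ ending) at $v$. These exist because strong semicomplete digraphs have Hamilton dicycles, and single-vertex components are trivial.

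The key observation is that if $x\in V_a$ and $y\in V_b$ with $a\neq b$, then both $b(x,y)\le 1$ and $b(y,x)\le 1$. Assume $a<b$. The orpath $S_xP_{a+1}\dots P_{b-1}P_{b+1}\dots P_lP_1\dots P_{a-1}E_y$ from Case~1 of the proof of Lemma~\ref{lem: XYpathSemiC} is a Hamilton $(x,y)$-orpath. Its only possible backward arc is the one joining $P_l$ to $P_1$, and only when both of these blocks are present. For the $(y,x)$-direction, I would reverse the roles and take
\[
S_yP_{b+1}\dots P_lP_1\dots P_{a-1}P_{a+1}\dots P_{b-1}E_x .
\]
I need to check that each junction is a forward arc or that there is only one backward junction. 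Going from $V_b$ upward to $V_l$ is forward, and $P_l\to P_1$ is backward. From $P_1$ up to $P_{a-1}$ is forward, skipping $V_a$, and $P_{a-1}\to P_{a+1}$ is forward. Continuing up to $P_{b-1}$ is forward, and then $P_{b-1}\to E_x$ goes from a component of index $<b$ but $>a$ into $V_a$, which is backward. That gives two backward arcs, so this ordering needs adjustment.

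The better ordering is $S_y P_{b+1}\dots P_l\,P_{a+1}\dots P_{b-1}\,P_1\dots P_{a-1}E_x$. Here the junctions are forward within each run, and the backward ones are $P_l\to P_{a+1}$ and $P_{b-1}\to P_1$, still two. To get a single backward arc I would instead start from a cleverer endpoint. Since $b(y,x)$ concerns a $(y,x)$-orpath, I can read any Hamilton $(x,y)$-orpath in reverse, so every arc flips direction. The cleanest route is therefore to \emph{choose} $x$ and $y$ rather than fight this.

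So I would pick $x\in X\cap V_1$ or $y\in Y\cap V_1$ in a first/last-component position. Concretely, the goal is to choose $x,y$ in different components with one of them in $V_1$ and the other in $V_l$. Suppose $x\in V_1$ and $y\in V_l$. Then $S_xP_2\dots P_{l-1}E_y$ is a Hamilton dipath, so $b(x,y)=0$. Its reverse, $y$ to $x$, is all backward, so instead I would use $S_yP_1'\dots$. Since all arcs go $V_i\to V_j$ for $i<j$, a $(y,x)$-orpath starting in $V_l$ has its first junction leaving $V_l$ backward. The orpath $S_yP_2\dots P_{l-1}E_x$ has exactly one backward arc at $S_y\to P_2$ if $l\ge 3$, or at $S_y\to E_x$ if $l=2$, and all later junctions $P_{l-1}\to V_1$ are backward too. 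That is again two, so I would reorder to $S_y\,E_x'$: take $E_x$ ending at $x$ in $V_1$ and preceded by $P_2\dots P_{l-1}$. This fails, so the right tool is the cyclic trick. I would form the orpath $S_y$ followed by $P_1$ (a Hamilton dipath of $V_1$ ending at $x$ requires $V_1$ to come last). Instead I would use $E$-type blocks: write $P_1$ as the full block $Q$ of $V_1$ starting at some vertex and ending at $x$, preceded by $P_2\dots P_{l-1}$. The sequence $y\text{-block}, P_2,\dots,P_{l-1}, \text{block ending at } x$ has backward arcs at $V_l\to V_2$ and $V_{l-1}\to V_1$.

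The hard step is therefore the choice of $x,y$. I would use the fact that $V(D)=X\cup Y$ with both nonempty, and split into cases by whether $V_1$ (or $V_l$) meets both $X$ and $Y$. If $x,y$ can be chosen in consecutive components $V_a,V_{a+1}$, the orpath $S_y P_{a+2}\dots P_l P_1\dots P_{a-1}E_x$ has a single backward arc, namely the wrap-around $P_l\to P_1$, since $P_{a-1}\to V_a$ is forward. Hence the main work is to show that one can always find $x\in X$, $y\in Y$ in the same component or in consecutive components $V_a,V_{a+1}$ with $x$ in the earlier one (or symmetrically $y$ earlier). This follows by scanning the components in order: either some component meets both $X$ and $Y$, or there is a first index where membership switches. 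The same-component case is handled by Lemma~\ref{PathStrongSemiC}(b) together with the insertion trick of Case~2 of Lemma~\ref{lem: XYpathSemiC}, which adds only the single wrap-around backward arc. I expect this same-component case to be the main obstacle, since it requires $b_{V_a}(x,y)+b_{V_a}(y,x)\le 2$ to be split as $0+\cdot$ appropriately. I would first try choosing $x,y$ consecutive on a Hamilton dicycle of $V_a$, so that one direction has $0$ backward arcs internally, and then absorb the other components.
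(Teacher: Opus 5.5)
\textbf{Gap in the choice of $x,y$.} Your closing construction is right and is exactly the paper's. For $x\in V_a$, $y\in V_{a+1}$, the orpaths $S_xP_{a+2}\dots P_lP_1\dots P_{a-1}E_y$ and $S_yP_{a+2}\dots P_lP_1\dots P_{a-1}E_x$ each have only the wrap-around as a possible backward arc.

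Your opening ``key observation'' for arbitrary $a\neq b$ is false. In the transitive tournament on $v_1,v_2,v_3$ with $v_iv_j\in A$ for $i<j$, one has $b(v_3,v_1)=2$. You should delete it rather than silently abandon it.

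The genuine gap is how you choose $x$ and $y$. You split into two cases: ``some component meets both $X$ and $Y$'', handled inside that component, and ``no component does'', which gives consecutive components. The first branch fails:
\begin{itemize}
\item If the shared component is a single vertex $v\in X\cap Y$, there is no pair $x\neq y$ in it at all.
\item If it is larger, taking $x,y$ consecutive on a Hamilton dicycle of $V_a$ gives $0$ internal backward arcs in one direction but up to $2$ in the other. The insertion trick absorbs only one of these.
\end{itemize}
This is a real obstruction, not loose bookkeeping. Let $D$ be the dicycle $v_1v_2v_3v_1$ plus a vertex $w$ with $v_1w,v_2w,v_3w\in A(D)$, and let $X=\{v_1,w\}$, $Y=\{v_2,v_3\}$. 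Then $V_1=\{v_1,v_2,v_3\}$ meets both sets. The only same-component pairs are $(x,y)=(v_1,v_2)$ and $(v_1,v_3)$, yet $b(v_1,v_2)=2$ and $b(v_3,v_1)=2$. In each case there are just two candidate orpaths, and both have two backward arcs.

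\textbf{Missing idea.} The same-component case is never needed. Since $l\ge 2$ and $X\cup Y=V(D)$, there is always a $k$ with $X\cap V_k\neq\emptyset$ and $Y\cap V_{k+1}\neq\emptyset$, possibly after swapping the roles of $X$ and $Y$. The swap is harmless because the conclusion is symmetric, and this holds whether or not some component meets both sets.

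The paper first picks $x\in X$, $y\in Y$ in different components $V_i,V_j$ with $i<j$; this is possible because $X$ and $Y$ cannot both lie in $V_1$. It then scans from $V_i$ to $V_j$. For example, take the largest $k\in\{i,\dots,j-1\}$ with $X\cap V_k\neq\emptyset$. Either $k+1=j$, or $V_{k+1}$ misses $X$ and so lies in $Y$; either way $Y\cap V_{k+1}\neq\emptyset$. In the example above this gives $x=w$, $y=v_2$, with $b(w,v_2)=1$ and $b(v_2,w)=0$. With this selection, your two orpaths complete the proof.
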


\begin{proof}
Decompose $D$ into strong components $V_1,\dots,V_l$ with all arcs pointing from $V_i$ to $V_j$ whenever $i<j$. All strong semicomplete digraphs contain a Hamilton dicycle, and hence contain a Hamilton dipath starting or ending at any vertex (not necessarily at the same time). We denote $P_i$ to be an arbitrary Hamilton dipath in $V_i$, and for a vertex $v$, we denote $S_v$ ($E_v$, respectively) to be a Hamilton dipath in the strong component containing $v$ starting at $v$ (ending at $v$, respectively).

Without loss of generality, assume $Y \not\subset V_1$. If $X \cap V_1 \neq \emptyset$, we pick $x \in X \cap V_1$ and $y \in Y \setminus V_1$. If $X \cap V_1 = \emptyset$, we pick $y\in Y \cap V_1$ and $x \in X \setminus V_1$. In either case, we have $x \in X, y\in Y$ such that $x \in V_i, y \in V_j$ for some $i \neq j$.
Without loss of generality, we may assume that $i<j$. Then there must be some $k \in \{i,i+1,\dots j-1\}$ such that $X\cap V_k\ne \emptyset$ and $Y\cap V_{k+1}\ne \emptyset$, and we finally choose new $x\in X\cap V_k$ and new $y\in Y\cap V_{k+1}$.

The Hamilton orpath 
\[
S_{x}P_{k+2}\dots P_l P_1 \dots P_{k-1}E_y
\]
yields $b(x,y) \le 1$, and the Hamilton orpath
\[
S_{y}P_{k+2}\dots P_l P_1 \dots P_{k-1}E_x
\]
yields $b(y,x) \le 1$.
\end{proof}

\begin{lemma}\label{lem: FullUnion}
Let $D$ be a semicomplete digraph. Let $X$ and $Y$ be non-empty subsets of $V(D)$ such that $X \cup Y = V(D)$. Then there exist $x \in X$ and $y \in Y$ such that $b(x,y) \le 1$ and $b(x,y)+b(y,x) \le 2$.
\end{lemma}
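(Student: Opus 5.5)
We split according to whether $D$ is strong. If $D$ is not strong, Lemma~\ref{lem: FullUnionNonStrong} gives $x\in X$ and $y\in Y$ with $b(x,y)\le 1$ and $b(y,x)\le 1$. Hence $b(x,y)\le 1$ and $b(x,y)+b(y,x)\le 2$, and this case is done.

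Now assume $D$ is strong. By Camion's theorem $D$ has a Hamilton dicycle $C=v_1v_2\dots v_nv_1$. Lemma~\ref{PathStrongSemiC}(b) gives $b(u,w)+b(w,u)\le 2$ for every pair of distinct vertices $u,w$. So in the strong case only the condition $b(x,y)\le 1$ remains, and we need one pair $x\in X$, $y\in Y$ achieving it.

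To find such a pair, use $C$. Suppose some vertex of $X$ is immediately followed on $C$ by a vertex of $Y$, that is, $v_i\in X$ and $v_{i+1}\in Y$. Put $x=v_i$ and $y=v_{i+1}$. Then $C[v_{i+1},v_i]$ is a Hamilton $(y,x)$-dipath, so $b(y,x)=0$. Lemma~\ref{PathStrongSemiC}(b) then gives $b(x,y)\le 2$. This is the wrong direction. The right move is to take the consecutive pair in the other order: look for an index $i$ with $v_i\in X$ and $v_{i-1}\in Y$, i.e.\ an arc of $C$ from $Y$ into $X$. Such an arc gives a Hamilton $(x,y)$-dipath $C[v_i,v_{i-1}]$, so $b(x,y)=0\le 1$ and $b(x,y)+b(y,x)\le 2$.

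Such an arc exists because $X$ and $Y$ are nonempty and cover $V(D)$. There are two cases.
\begin{itemize}
\item If $X\setminus Y\neq\emptyset$ and $Y\setminus X\neq\emptyset$, the cyclic sequence along $C$ passes from a vertex lying only in $Y$ to one lying only in $X$. Somewhere on the way there is a consecutive pair $v_{i-1}\in Y$, $v_i\in X$.
\item Otherwise one set contains the other, say $X\subseteq Y=V(D)$. Pick any $v_i\in X$. Then $v_{i-1}\in Y$ and $v_{i-1}\neq v_i$ since $n\ge 2$.
\end{itemize}
The degenerate case $n=1$ cannot arise, because distinct $x,y$ are needed. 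If $|V(D)|=2$ and $X=Y$ is a single vertex, no distinct pair exists at all, so we assume the implicit requirement $x\neq y$ is satisfiable.

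The main care point is this existence argument, in particular handling overlapping $X$ and $Y$ and making sure $x\neq y$. If $X=Y=\{v\}$ the statement is vacuous or false, so we note the lemma implicitly assumes a choice of distinct vertices is possible. Everything else follows immediately from Lemmas~\ref{PathStrongSemiC} and~\ref{lem: FullUnionNonStrong}.
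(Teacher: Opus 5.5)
Your proof is correct and is essentially the paper's argument: the non-strong case follows from Lemma~\ref{lem: FullUnionNonStrong}, and in the strong case an arc $yx$ of a Hamilton dicycle $C$ with $y\in Y$, $x\in X$ gives $b(x,y)=0$, after which Lemma~\ref{PathStrongSemiC}(b) bounds the sum. You also spell out why such an arc exists, which the paper asserts without comment; one small slip is that the only degenerate situation with no distinct pair is $|V(D)|=1$, not $|V(D)|=2$ as you wrote.
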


\begin{proof}
If $D$ is not strong, we are done by Lemma \ref{lem: FullUnionNonStrong}.
If $D$ is strong, there must be $x \in X$ and $y \in Y$ such that $yx \in A(C)$ for some Hamilton dicycle $C$, and then $b(x,y) = 0$ and by Lemma \ref{PathStrongSemiC}~part (b), $b(x,y)+b(y,x) \le 2$.
\end{proof}

The following lemma was proved, as Lemma 2, in \cite{chen_manalastas_1983}.

\begin{lemma}\label{lem: PathCombine}
Let $P=p_1 \dots p_s$ and $Q=q_1\dots q_t$ be two dipaths in a directed graph $D$, and let there be an arc between $p_i$ and $q_j$ for every $p_i\in V(P)$ and $q_j\in V(Q)$.
Then there is a dipath $R=r_1 \dots r_{s+t}$ in $D[V(P) \cup V(Q)]$ such that $r_1\in \{p_1,q_1\}$ and $r_{s+t}=\{p_s,q_t\}$.   
\end{lemma}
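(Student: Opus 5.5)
We prove the statement of Lemma~\ref{lem: PathCombine} (Chen--Manalastas) by induction on $s+t$, merging the two dipaths greedily from the front. The statement should be read as $r_{s+t}\in\{p_s,q_t\}$. If $s=0$ or $t=0$ there is nothing to prove, since we simply take $R$ to be the nonempty path. So assume $s,t\ge 1$.

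We look at the arcs between the final vertices $p_s$ and $q_t$ and between the initial vertices $p_1$ and $q_1$. First suppose $p_1q_1\in A(D)$. We would like to take $r_1=p_1$ and recurse on $P'=p_2\dots p_s$ and $Q$. The hypothesis is preserved, since every vertex of $P'$ is still adjacent to every vertex of $Q$. Induction gives a dipath $R'$ on $V(P')\cup V(Q)$ that starts in $\{p_2,q_1\}$ and ends in $\{p_s,q_t\}$. Both $p_1p_2$ and $p_1q_1$ are arcs, so prepending $p_1$ yields a dipath starting at $p_1$ and ending in $\{p_s,q_t\}$. When $s=1$, $P'$ is empty; then $R'=Q$, and prepending $p_1$ via $p_1q_1$ gives a path ending at $q_t$, which is still allowed. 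Symmetrically, if $q_1p_1\in A(D)$ we start with $q_1$ and recurse on $P$ and $Q'=q_2\dots q_t$. By hypothesis there is an arc between $p_1$ and $q_1$ in at least one direction, so one of the two cases always applies.

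This actually gives a path whose first vertex is in $\{p_1,q_1\}$, as required, and whose last vertex is in $\{p_s,q_t\}$. The only delicacy is that the induction must be phrased with the statement ``$R$ starts in the set of initial vertices of the (nonempty) given paths and ends in the set of their terminal vertices''. This phrasing is what handles the degenerate case where one path becomes empty: the remaining path then supplies both endpoints. I expect this bookkeeping of empty paths to be the only point needing care; the rest is the routine greedy merge.
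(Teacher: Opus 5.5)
Your proof is correct and complete. Removing whichever of $p_1,q_1$ has an arc to the other preserves the complete-adjacency hypothesis, and the removed vertex has arcs to both new starting vertices; this also covers the degenerate case where one path becomes empty, so you get $r_1\in\{p_1,q_1\}$ and $r_{s+t}\in\{p_s,q_t\}$. The paper gives no proof of its own but cites Chen and Manalastas for this lemma, and your greedy front-merging induction is the standard argument for it.
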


Denote $N_D(v)=N^+_D(v) \cup N^-_D(v)$. For a subset $X \subset V(D)$, we may write $N_X(v) := N_D(v) \cap X$ when $D$ is clear from context.

\begin{lemma}\label{DipathPlusSemi}
Let $D$ be a digraph with $\alpha(D)\le 2$, and let $X$ and $Y$ be non-empty disjoint subsets of $V(D)$ such that $D[X]$ is semi-complete, $D[Y]$ is connected, and $X \cup Y=V(D)$. Let $P=y_1y_2\dots y_k$ be a Hamilton orpath in $D[Y]$ with at most $2$ backward arcs (exists by  Theorem \ref{mainHP}). 
If, in addition, one of the following conditions holds:
\begin{description}
\item[(a)] $y_1y_k \notin A(D^*)$, and either $X$ is a semicomplete digraph of maximum size in $D$, or $|N_X(y_1)|,|N_X(y_k)| \ge 1$.

\item[(b)] $P$ has at most $1$ backward arc, and there exist $x \neq x' \in X $ such that $y_1x, y_kx' \in A(D^*)$.

\item[(c)] $D[Y]$ contained a Hamilton orcycle $C=y_1\dots y_ky_1$ with at most $2$ backward arcs, and there exist $i~\in \{1,2,\dots,k\}$, $x_1\in N_X(y_{i-1})$, $x_2\in N_X(y_i)$ , and $x_3\in N_X(y_{i+1})$ such that $x_2 \notin \{x_1,x_3\}$.

\item[(d)] $|X| \ge 4$, every vertex in $Y$ has at least $1$ neighbour in $X$, and $P$ has at most $1$ backward arc.
\end{description}
Then $D$ contains a Hamilton orcycle with at most $5$ backward arcs.
\end{lemma}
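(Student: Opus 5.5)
The common idea in all four cases is to splice the orpath (or orcycle) through $Y$ with a Hamilton orpath of the semicomplete digraph $D[X]$. We join the two pieces by two $UG(D)$-edges between $X$ and $Y$; each connecting edge may be oriented the wrong way, so together they cost at most $2$ backward arcs. The orpath of $D[X]$ is taken between two prescribed distinct vertices $x,x'$, and by Lemma~\ref{lem: XYpathSemiC}(a) it costs at most $2$. We also have the freedom to traverse it from $x$ or from $x'$, and by Lemma~\ref{lem: XYpathSemiC}(b) one of the two directions costs at most $1$.

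\textbf{Case (b).} Take the cycle $y_1Py_k\,x'\,H\,x\,y_1$, where $H$ is a Hamilton $(x',x)$-orpath of $D[X]$. It has at most $1$ backward arc from $P$ and $2$ from the connecting edges. Reversing the whole cycle if necessary (which swaps forward and backward arcs only globally, so instead we choose the direction of $H$), Lemma~\ref{lem: XYpathSemiC}(b) lets us use whichever of $b(x,x')$, $b(x',x)$ is at most $1$. To do this we traverse $P$ in the reverse direction, which turns its at most $1$ backward arc into at most $k-2$; this is bad, so instead we simply use $b\le 2$ from part (a). This gives $1+2+2=5$. If $|X|=1$, then $x=x'$ is impossible, so $|X|\ge2$ and the construction is fine.

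\textbf{Case (a).} Since $\alpha(D)\le2$ and $y_1y_k\notin A(D^*)$, every $x\in X$ is adjacent to $y_1$ or to $y_k$. In the first alternative, if some $x$ were adjacent to neither, then $X$ would not be maximum only if $\{y_1,y_k,x\}$ were independent; so the same conclusion holds, and both $N_X(y_1)$ and $N_X(y_k)$ are nonempty once $X\not\subseteq N(y_1)$ (otherwise $X\cup\{y_1\}$ is a larger semicomplete set). Hence $N_X(y_1)\cup N_X(y_k)=X$ with both sets nonempty. Apply Lemma~\ref{lem: FullUnion} to obtain $x\in N_X(y_1)$ and $x'\in N_X(y_k)$ with $b(x',x)\le1$ (or with the roles swapped). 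When $x\ne x'$, the cycle $y_1Py_kx'Hxy_1$ has at most $2+2+1=5$ backward arcs. If the only choice forces $x=x'$, we instead handle $|X|=1$ directly (cost $2+2$), or re-pick using the non-strong/strong analysis.

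\textbf{Case (c).} Remove the cycle arc $y_{i-1}y_i$, or alternatively $y_iy_{i+1}$. In either case we obtain a Hamilton orpath of $D[Y]$ with at most $2$ backward arcs whose ends are $\{y_i,y_{i-1}\}$ or $\{y_i,y_{i+1}\}$, and these ends have the distinct neighbours $x_2,x_1$ or $x_2,x_3$. Choosing to delete a backward arc of $C$ when possible brings the cost of the $Y$-part to $1$. We then finish as in (b). The main obstacle is here: the counting $2+2+2=6$ is too large in general, so we must use the two choices, deleting $y_{i-1}y_i$ or $y_iy_{i+1}$, together with Lemma~\ref{lem: XYpathSemiC}(b). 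Either one of the $X$-paths costs $\le1$, or the deleted cycle arc was backward; in both situations we save $1$ and get total $\le5$. I expect this to need careful bookkeeping of orientations.

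\textbf{Case (d).} With $|X|\ge4$ and every $y$ adjacent to $X$, $y_1$ and $y_k$ have neighbours. If these can be chosen distinct, we are in case (b). Otherwise $N_X(y_1)=N_X(y_k)=\{x\}$. Then $\alpha\le2$ forces $y_1y_k\in A(D^*)$, since otherwise two vertices of $X-x$ together with $y_1$ would form an independent set. We then rotate the orcycle $Py_1$, which has at most $2$ backward arcs, at a position where consecutive vertices have distinct neighbours, and fall back to case (c).
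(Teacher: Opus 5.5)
Your arguments for (a) and (b) are in substance the paper's. In (a), $\alpha(D)\le 2$ gives $N_X(y_1)\cup N_X(y_k)=X$, and maximality of $X$ makes both parts nonempty. Lemma~\ref{lem: FullUnion} then gives $x\in N_X(y_1)$ and $x'\in N_X(y_k)$ with $b(x',x)\le 1$, and the splice costs $2+1+2$. In (b), $b(x',x)\le 2$ gives $1+2+2$. Parts (c) and (d), however, have genuine gaps.

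\textbf{Part (c).} Your dichotomy ``either one of the $X$-paths costs at most $1$, or the deleted cycle arc was backward'' is false. Once you delete an edge of $C$ and traverse the rest along $C$, the direction of the $X$-path is forced, as you noticed yourself in (b). Deleting $y_{i-1}y_i$ gives $y_i\dots y_ky_1\dots y_{i-1}$, then $x_1$, a Hamilton $(x_1,x_2)$-orpath of $D[X]$, and back to $y_i$. Deleting $y_iy_{i+1}$ gives $y_{i+1}\dots y_ky_1\dots y_i$, then $x_2$, a Hamilton $(x_2,x_3)$-orpath, and back to $y_{i+1}$. Lemma~\ref{lem: XYpathSemiC}(b) relates $b(x_1,x_2)$ to $b(x_2,x_1)$, not to $b(x_2,x_3)$. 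For example, with $x_1=v_5$, $x_2=v_3$, $x_3=v_1$ in $TT(v_1,\dots,v_5)$ from Section~\ref{sec:OP}, both values equal $2$. If neither edge of $C$ at $y_i$ is backward, your accounting then gives $6$ for both options.

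The missing observation concerns the edge between $x_2$ and $y_i$. It is traversed as $x_2\to y_i$ in the first cycle and as $y_i\to x_2$ in the second. Choosing the option that agrees with the arc of $D$ makes this connecting arc forward, and the count is $2+2+1=5$ outright. That is exactly the paper's proof of (c).

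\textbf{Part (d).} ``Rotate at a position where consecutive vertices have distinct neighbours and fall back to (c)'' is unjustified, and such a position need not exist. Part (c) needs some $y_i$ with a neighbour outside $\{x_1,x_3\}$, for some neighbour $x_1$ of $y_{i-1}$ and some neighbour $x_3$ of $y_{i+1}$. Two consecutive vertices with different $X$-neighbourhoods do not give this. Take $D[Y]$ semicomplete on $y_1,\dots,y_4$ with $N_X(y_1)=N_X(y_4)=\{a\}$ and $N_X(y_2)=N_X(y_3)=\{b\}$. This is a 2-connected instance of your reduced situation with no (c)-position on $y_1y_2y_3y_4y_1$.

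Worse, if $N_X(y)=\{a\}$ for all $y\in Y$, every hypothesis of (d) holds, but $a$ is a cut vertex and no Hamilton orcycle exists. So (d) is not provable from its listed hypotheses alone. The paper's proof implicitly uses 2-connectivity of $D$ (true where the lemma is applied) via Corollary~\ref{cor: TwoSemiC}.

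After reducing as you do to $y_1y_k\in A(D^*)$ and $N_X(y_1)=N_X(y_k)=\{x\}$, the paper argues in three steps:
\begin{enumerate}
\item If some $y_i$ is non-adjacent to $y_1$, then $N_X(y_1)\cup N_X(y_i)=X$ forces $|N_X(y_i)|\ge |X|-1\ge 3$, which does yield a (c)-position at $y_i$. Otherwise $y_1$, and symmetrically $y_k$, is adjacent to all of $Y$.
\item If $D[Y]$ is semicomplete, Corollary~\ref{cor: TwoSemiC} finishes.
\item Otherwise, take the non-adjacent pair $y_i,y_j$ with $i$ minimal. This gives Hamilton orpaths of $D[Y]$ from $y_j$ to $y_i$ and from $y_i$ to $y_j$ with at most $7$ backward arcs together. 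Close them through $D[X]$ in opposite directions via $x_1\in N_X(y_i)$ and $x_2\in N_X(y_j)$ with $b(x_1,x_2)+b(x_2,x_1)\le 2$. The two resulting Hamilton orcycles have at most $11$ backward arcs in total, so one has at most $5$.
\end{enumerate}

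Finally, your reason for $y_1y_k\in A(D^*)$ is wrong as written, since two vertices of $X$ are always adjacent. The independent triple is $\{y_1,y_k,x'\}$ with $x'\in X\setminus\{x\}$.
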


\begin{proof}
For $x \neq x' \in X$, we write $Q[x,x']$ for any Hamilton $(x,x')$-orpath in $D[X]$ with the minimum number of backward arcs (among all Hamilton $(x,x')$-orpath in $D[X]$).

\textbf{(a):} Since $\alpha(D) \le 2$, we have $N_X(y_1) \cup N_X(y_k)=X$. If $X$ is a semicomplete digraph of maximum size in $D$ then $|N_X(y_1)|,|N_X(y_k)| \le |X|-1$, and hence $|N_X(y_1)|,|N_X(y_k)| \ge 1$. So in both cases we may assume that $|N_X(y_1)|,|N_X(y_k)| \ge 1$.

By Lemma \ref{lem: FullUnion}, there exist $x_1 \in N_X(y_1)$ and $x_2 \in N_X(y_k)$ such that $Q[x_2,x_1]$ has at most one backward arc.
The orcycle 
\[
PQ[x_2,x_1]y_1
\]
is a Hamilton orcycle in $D$ with at most 5 backward arcs, with two possible backward arcs in $P$, one in $Q[x_2,x_1]$, and two from $y_kx_2$ and $x_1y_1$.

\textbf{(b):} By Lemma \ref{lem: XYpathSemiC} (a), $Q[x',x]$ has at most $2$ backward arcs. The orcycle 
\[
PQ[x',x]y_1
\]
is a Hamilton orcycle in $D$ with at most 5 backward arcs, with two possible backward arcs in $Q[x',x]$, one in $P$, and two from $y_kx'$ and $xy_1$.

\textbf{(c):} If $x_2y_i \in A(D)$, then 
\[
y_i\dots y_k y_1 \dots y_{i-1}Q[x_1,x_2]y_i
\]
is a Hamilton orcycle in $D$ with at most $5$ backward arcs, with $2$ possible backward arcs in $Q[x_1,x_2]$, $2$ in $y_i\dots y_k y_1 \dots y_{i-1}$, and $y_{i-1}x_1$. Conversely, if $y_ix_2 \in A(D)$, then 
\[
y_{i+1}\dots y_k y_1 \dots y_iQ[x_2,x_3]y_{i+1}
\]
is a Hamilton orcycle in $D$ with at most $5$ backward arcs, with $2$ possible backward arcs in $Q[x_2,x_3]$, $2$ in $y_{i+1}\dots y_k y_1 \dots y_i$, and $x_3y_{i+1}$.

\textbf{(d):} We may assume that $y_1y_k \in A(D^*)$, as otherwise we are done by part~(a). Thus, we now have a Hamilton orcycle in $D[Y]$ with at most 2 backward arcs. We may also assume $|N_X(y_1)| = |N_X(y_k)|=1$ as otherwise we are done by part~(b). 

If there exists $i \in \{2,\dots,k-1\}$ such that $y_1y_i \notin A(D^*)$, then $\alpha(D) \le 2$ implies that $N_X(y_1)\cup N_X(y_i)=X$, which implies that $|N_X(y_i)|\ge |X|-1 \ge 3$. 
Pick any $x_1 \in N_X(y_{i-1})$ and any $x_3 \in N_X(y_{i+1})$. Since $|N_X(y_i)| \ge 3$, we can pick $x_2 \in N_X(y_i) \setminus \{x_1,x_3\}$. Hence, $x_2 \notin \{x_1,x_3\}$ and we are done by part~(c). 

Hence, we may assume that $y_1y_i \in A(D^*)$ for all $i \in \{2,\dots,k\}$. Analogously, we may also assume that $y_ky_i \in A(D^*)$ for all $i \in \{1,\dots,k-1\}$. We may further assume that $Y$ is not semicomplete, as otherwise we are done by Corollary \ref{cor: TwoSemiC}. 
Let $i$ be the minimum number such that there exists $j \in \{i+1,\dots,k-1\}$ such that $y_iy_j \notin A(D^*)$. Note that $i>1$ and $j \neq i+1$ as $y_iy_{i+1} \in A(D^*)$. By the choice of $i$, we have $y_{i-1}y_{j+1} \in A(D^*)$

Consider 
\[
P'=y_j y_{j+1}\dots y_ky_{i+1}y_{i+2}\dots y_{j-1}y_1y_2\dots y_i, \enspace
P''=y_iy_{i+1}\dots y_{j-1}y_1y_2 \dots y_{i-1}y_{j+1}y_{j+2} \dots y_ky_j.
\]
We can see that $P'$ is a Hamilton orpath in $D[Y]$ with at most $3$ backwards arcs ($y_ky_{i+1}, y_{j-1}y_1$, and one backward arc in $P$), with no arc between the $y_j$ and $y_i$, and $P''$ is a Hamilton orpath in $D[Y]$ with at most $4$ backwards arcs ($y_{j-1}y_1, y_{i-1}y_{j+1},y_ky_j$, and one backward arc in $P$), with no arc between the $y_i$ and $y_j$.

Since $\alpha(D) \ge 2$, we have $N_X(y_i) \cup N_X(y_j) = X $. As $|N_X(y_i)|,|N_X(y_j)| \ge 1$, by Lemma \ref{PathStrongSemiC} part~(b) and Lemma \ref{lem: FullUnionNonStrong}, there exist $x_1 \in N_X(y_i)$ and $x_2 \in N_X(y_j)$ such that $Q[x_1,x_2]$ and $Q[x_2,x_1]$ has at most $2$ backward arcs in total.

The Hamilton orcycles
\[
C_1=P'Q[x_1,x_2]y_j, \enspace C_2=P''Q[x_2,x_1]y_i
\]
has at most $11$ backward arcs in total ($7$ from $P'$ and $P''$, $2$ from $Q[x_1,x_2]$ and $Q'[x_2,x_1]$, and one each from $\{x_1y_i,y_ix_1\}$ and $\{x_2y_j,y_jx_2\}$). Hence, at least one of $C_1$ and $C_2$ has at most $5$ backward arcs.
\end{proof}

Now we are ready to prove Theorem \ref{mainHC}. For convenience of the reader, we start from its statement. 

\vspace{2mm}

\noindent{\bf Theorem} {\bf \ref{mainHC}.}
{\em If $D$ is a 2-connected digraph with $\alpha(D) \le 2$, then $D$ contains a Hamilton orcycle with at most 5 backward arcs.}

\vspace{2mm}

\begin{proof}
Let $D$ be a countra-example to the claim of the theorem.
Let $D_X$ be a semicomplete digraph in $D$ with the largest size, and write $X=V(D_X)$. If $|X|=|V(D)|$, then the orcycle obtained by joining the head and the tail of the Hamilton dipath given by Theorem \ref{HamPathTour} give a contradiction. Thus, we can assume that $ Y:= V(D) \setminus X$ is non-empty. 
If $|V(D)| \le 11$, then $D$ is 2-connected and $\alpha(D) \le 2$ implies that $UG(D)$ contains a Hamilton orcycle $C$. So $C$ or its reverse yields a Hamilton orcycle with at most $\lfloor 11/2 \rfloor=5$ backward arcs.
Hence $|V(D)| \ge 12 \ge 9=R(3,4)$, which means that $|X|\ge 4$.

For any set $V \in V(D)$ and any $v \neq v' \in V$, write $Q_V[v,v']$ for any Hamilton $(v,v')$-orpath in $D[V]$ with $b_V(v,v')$ backward arcs. If $V=X$, we simply write $Q[v,v']$.

\textbf{Case 1:} For each vertex $y \in Y$, there is some $x \in X$ such that $xy \in A(D^*)$. 

At this point, the first two conditions of Lemma \ref{DipathPlusSemi} part~(d) are already fulfilled. If $D[Y]$ contains a Hamilton orpath at most $1$ backward arc, then we get a contradiction.

\textbf{Case 1.1:} $D[Y]$ is not connected. 

As $\alpha(D) \le 2$, $D[Y]$ consists of two vertex-disjoint semicomplete digraphs with no arcs between them. We denote them $A$ and $B$. 

First assume that for all $a \in A$ and all $b \in B$, $N_X(a)\cap N_X(b)= \emptyset$. Pick any $a'\in A$, and let $X_1= N_X(a')$.
For all $a \in A $ and all $b \in B$, we have $ab \notin A(D^*)$, so $N_X(a)\cup N_X(b) = X$. Hence, $N_X(b) = X \setminus X_1$ for all $b \in B$. Similarly, $N_X(a)= X_1$ for all $a \in A$. Since $D$ is 2-connected, we must have $|X_1|, |X \setminus X_1| \ge 2$.

Pick any $x_1 \in X_1$ and any $x_2 \in X \setminus X_1$.
Let $X'=X \setminus \{x_1,x_2\}$, $Y'=Y\cup\{x_1,x_2\}$. We have $D[X']$ is semicomplete, $D[Y']$ is connected but not 2-connected (as $x_1$ is a cut vertex of $D[Y']$), and $X' \cup Y'=V(D)$. Note that $|X_1 \setminus \{x_1,x_2\}| \ge 1, |(X \setminus X_1) \setminus \{x_1,x_2\}| \ge 1$ and $x_1, x_2$ are adjacent to all vertices in $X'$, so we have $|N_{X'}(y)| \ge 1$ for all $y \in Y'$. Let $P=y_1y_2\dots y_k$ be a Hamilton orpath in $D[Y']$ with at most $2$ backward arcs, which exists by Theorem \ref{mainHP}. Note that $k=|Y'|=|A|+|B|+2 \ge 4$. As $Y'$ is not 2-connected, we must have $y_1y_k \notin A(D^*)$. Furthermore, $|N_{X'}(y_1)|,|N_{X'}(y_k)| \ge 1$, so we get a contradiction by Lemma \ref{DipathPlusSemi} part~(a).

So there exists $x\in X, a \in A, b \in B$ such that $ax, bx \in A(D^*)$. Let $X'=X \setminus \{x\}$. Note that we can find $a' \in A$ such that $b_A(a,a')+ b_A(a',a)\le 2$. Indeed, if $|A|=1$, this hold trivially with $a'=a$. If $|A| \ge 2$, we apply lemma \ref{lem: FullUnion} with $D \leftarrow D[A], X \leftarrow \{a\}, Y \leftarrow A\setminus\{a\}$ and find $a' \in A \setminus \{a\}$ such that $b_A(a,a')+ b_A(a',a)\le 2$. Similarly, there exist $b' \in B$ such that $b_B(b,b')+ b_B(b',b)\le 2$. 

Since $a'b' \notin A(D^*)$ and $\alpha(D) \le 2$, we have $N_{X}(a') \cup N_{X}(b')= X$, and we may assume that $|N_{X}(b')| \ge N_{X}(a')|$. As $|X| \ge 4$, we have $|N_{X}(b')| \ge 2$ and then $|N_{X'}(b')| \ge 1$. Next we show that we can always find $a_1,a_2 \in A$ such that $xa_1 \in A(D^*)$, and $|N_{X'}(a_2)| \ge 1$, and  if $|A|>1$, $a_1\ne a_2$.
If there exists $a_0 \in A  \setminus \{a\}$ such that $N_{X'}(a_0) \neq \emptyset$, we assign $a_2=a_0$ and assign $a_1=a$. Otherwise, since $|N_X(y)| \ge 1$ for all $y \in Y$, we have $N_{X}(a_0) =\{x\}$ for all $a_0 \in A \setminus a$.
As $D$ is 2-connected, we must have $N_{X'}(a) \neq \emptyset$. In this case we assign $a_2=a$ and either assign $a_1$ as any vertex in $A \setminus \{a_2\}$ if $|A| \ge 2$, or set $a_1=a$ if $|A|=1$. Hence, we have $a_1, a_2$ with the desired properties.

By Lemma \ref{lem: XYpathSemiC}, we have $b_A(a_1,a_2)+ b_A(a_2,a_1)\le 3$.
Since $a_2b' \notin A(D^*)$, we have $N_{X'}(a_2) \cup N_{X'}(b')= X'$. By Lemma \ref{lem: FullUnion}, and since $|N_{X'}(a_2)| \ge 1$, $|N_{X'}(b')| \ge 1$ and $|X'|\ge 3$, there exist $x_a \neq x_b \in X'$ such that $x_a \in N_{X'}(a_2), x_b \in N_{X'}(b')$ and $b_{X'}(x_a,x_b)+ b_{X'}(x_b,x_a)\le 2$.
Consider the following two Hamilton orcycles 
\[
C_1=Q_A(a_2,a_1) x Q_B(b,b') Q_{X'}(x_b,x_a)a_2 \enspace, \quad 
C_2=a_2 Q_{X'}(x_a,x_b)Q_B(b',b) x Q_A(a_1,a_2).  
\]
They have at most 11 backward arcs in total, with at most 3 backward arcs from the union of $Q_A(a_1, a')$ and $Q_A(a', a_1)$,
2 backward arcs from each of $\{Q_{X'}(x_a,x_b), Q_{X'}(x_b,x_a)\}$, $\{Q_B(b,b'), Q_B(b',b)\}$, 
and 1 backward arc from each of $\{a_1x,xa_1\},\{bx,xb\},\{b'x_b,x_b b'\}, \{a_2 x_a, x_a a_2\}$. Hence, either $C_1$ or $C_2$ is a Hamilton orcycle in $D$ with at most 5 backward arcs, a contradiction.

\textbf{Case 1.2:} $D[Y]$ is connected.

\2

\textbf{Case 1.2.1:} There exists $y \in Y$ such that $|Y \setminus (N_Y(y) \cup \{y\})| \le 1$. 

Let $Y'=Y \setminus \{y\}$. By Theorem \ref{PathPartition}, we may partition $D[Y']$ into two dipaths $u_1\dots u_s$ and $v_1 \dots v_t$.
By Lemma \ref{DipathPlusSemi} part~(d), it suffices to find a Hamilton orpath in $D[Y]$ with at most one backward arc to derive a contradiction.

\2

{\bf Claim 0:} {\em $s,t \ge 2$}

\2

{\bf Proof of Claim~0:}
For the sake of contradiction, without loss of generality, assume that $s=1$. If $t=1$, then since $D[Y]$ is connected, $D[Y]$ is an orpath of length 2 with at most one backward arc, a contradiction. Therefore, $t \ge 2$.

If $yv_1 \notin A(D^*)$, then as $|Y \setminus (N_Y(y) \cup \{y\})| \le 1$, we have $u_1y, yv_2, yv_t \in A(D^*)$. Note that $v_1u_1\in  A(D^*)$, since otherwise $v_1 \dots v_t y u_1$ has at most 2 backward arcs, and we get a contradiction by Lemma \ref{DipathPlusSemi} part (a). Also note that $v_1u_1,u_1y, yv_t \in A(D)$, since otherwise $v_ty\in A(D)$ or $yu_1\in A(D)$ or $u_1v_1 \in A(D)$, and the Hamilton orcycle $v_1 \dots v_t y u_1 v_1$ in $D[Y]$ has at most 2 backward arcs, and it contains a Hamilton orpath in $D[Y]$ with at most 1 backward arc, a contradiction. Now we get a Hamilton orpath $v_1 u_1y v_2 \dots v_t$  in $D[Y]$ with at most 1 backward arc, a contradiction.

The case if $yv_t \notin  A(D^*)$ can be proved similarly.

If $u_1y \notin A(D^*)$, then as $|Y \setminus (N_Y(y) \cup \{y\})| \le 1$, we have $yv_i \in A(D^*)$ for all $i=1,2, \dots, t$. By Lemma \ref{lem: PathCombine}, there is a Hamilton dipath $p_1\dots p_{t+1}$ in $D[Y \setminus \{u_1\}]$.
Note that $u_1p_1,u_1p_{t+1} \notin A(D^*)$, since otherwise $u_1p_1 \in A(D^*)$ or $u_1p_{t+1} \in A(D^*)$, and then $u_1p_1p_2\dots p_{t+1}$ or $p_1p_2\dots p_{t+1}u_1$ is a Hamilton orpath in $D[Y]$ with at most one backward arc, a contradiction. As $\alpha(D) \le 2$, we have $p_1p_{t+1} \in A(D^*)$. Since $D[Y]$ is connected, we must have $u_1p_i \in A(D^*)$ for some $i \in 1,2, \dots t+1$. Then either $u_1p_ip_{i+1}\dots p_{t+1}p_1p_2 \dots p_{i-1}$ or $p_{i+1}p_{i+2}\dots p_{t+1}p_1p_2 \dots p_iu_1$ is a Hamilton orpath in $D[Y]$ with at most one backward arc, a contradiction.

Now the case left is $u_1y,yv_1,yv_t \in A(D^*)$. But in this case, either $v_1v_2\dots v_{t}yu_1$ or $u_1y v_1v_2\dots v_{t}$ is a Hamilton orpath in $D[Y]$ with at most one backward arc, depending on the arc between $y$ and $u_1$, which is a contradiction.

This completes the proof of Claim~0.
\2

{\bf Claim 1:} {\em $v_tu_1,u_sv_1 \in A(D^*)$}

\2

{\bf Proof of Claim~1:}
Since $|Y \setminus (N_Y(y) \cup \{y\})| \le 1$, we may assume that $u_sy,v_1y \in A(D^*)$.
$u_1\dots u_syv_1 \dots v_t$ is a Hamilton orpath in $D[Y]$ with at most two backward arcs. If $v_tu_1 \notin A(D^*)$, then we get a contradiction by Lemma \ref{DipathPlusSemi} part~(a). So $v_tu_1 \in A(D^*)$.
If $yv_1 \in A(D)$ or $u_sy \in A(D)$, then $u_1 \dots u_s y v_1 \dots v_t$ is a Hamilton orpath in $D[Y]$ with at most one backward arc, which is also a contradiction. So $v_1y, yu_s \in A(D)$.
Since $|Y \setminus (N_Y(y) \cup \{y\})| \le 1$, $u_{s-1} y \in A(D^*)$ or $yv_2 \in A(D^*)$ . Then $v_1 \dots v_t u_1 \dots u_{s-1} y u_s$ or $v_1 y v_2 \dots v_t u_1 \dots u_s$ is a Hamilton orpath with at most $2$ backward arc. So $v_1u_s \in A(D)$ by Lemma \ref{DipathPlusSemi} part~(a).
This completes the proof of Claim~1.

\2

Since $|Y \setminus (N_Y(y) \cup \{y\})| \le 1$, we may assume that $yv_i \in A(D^*)$ for all $i=1,2, \dots, t$. By Lemma \ref{lem: PathCombine}, there is a Hamilton dipath $p_1,\dots p_{t+1}$ in $D[\{y,v_1,v_2, \dots, v_t\}]$, with $p_1 = v_1$ or $y$ and $p_{t+1} = v_t$ or $y$. 
If $u_1y \in A(D^*)$, then $p_{t+1}u_1 \in A(D^*)$, and so $p_1 \dots p_{t+1}u_1 \dots u_s$ is a Hamilton orpath in $D[Y]$ with at most one backward arc. 
If $u_1y \notin A(D^*)$, then $u_sy \in A(D^*)$, and so $u_sp_1 \in A(D^*)$. Hence, $u_1 \dots u_sp_1 \dots p_{t+1}$ is a Hamilton orpath in $D[Y]$ with at most one backward arc, which is a contradiction. This completes the proof of Case~1.2.1.

\2

\textbf{Case 1.2.2:} $|Y \setminus (N_Y(y) \cup \{y\})| \ge 2$ for all $y \in Y$.

Suppose there exists $y' \in Y$ such that $|N_X(y')|=1$. Then $|X \setminus N_X(y')| = X-1$, and so
\[
|V(D) \setminus (N_D(y')\cup \{y\})|=|X \setminus N_X(y')|+|Y \setminus (N_Y(y')\cup \{y\})|\ge |X|+1.
\]
As $\alpha(D) \le 2$, $D[V(D) \setminus (N_D(y')\cup \{y\})]$ must be a semicomplete digraph larger than $X$, which contradicts the maximality of $X$.
So $|N_X(y)| \ge 2$ for all $y \in Y$. If $Y$ is semicomplete, then we get a contradiction by Corollary \ref{cor: TwoSemiC}. Hence, there exists $y\neq y' \in Y$ such that $yy' \notin A(D^*)$. This implies that $N_X(y) \cup N_X(y') = X $. Since $X$ is a semicomplete digraph of the largest size, we must have $N_X(y) \neq X$, and so $N_X(y) \neq N_X(y')$. 

Let $P=y_1y_2\dots y_k$ be a Hamilton orpath in $D[Y]$ with at most two backward arcs, which exists by Theorem \ref{mainHP}. By Lemma \ref{DipathPlusSemi} part~(a), we have $y_1y_k \in A(D^*)$. Since not all $N_X(y)$ are equal, there must be $i \in \{1,\dots,k-1\}$ such that $N_X(y_i) \neq N_X(y_{i+1})$. 

Pick any $x_{i+1} \in N_X(y_{i+1}) \setminus N_X(y_i)$. Now suppose we have picked $x_j$, we then pick $x_{j+1} \in N_X(y_{j+1}) \setminus \{x_j\}$ (with index modulo $k$). Doing this until we have picked $x_{i-1} \in N_X(y_{i-1}) \setminus \{x_{i-2}\}$. Since $x_{i+1} \notin N_X(y_i)$, we have $|N_X(y_i)\setminus \{x_{i-1},x_{i+1}\}| \ge 1$. So we can pick $x_i \in N_X(y_i)\setminus \{x_{i-1},x_{i+1}\}$.

Now for all $i=1,2,\dots,k$, we have $x_i \notin \{x_{i-1},x_{i+1} \}$. If there is $i \neq j$ such that $x_iy_i, y_jx_j \in A(D)$, then there exists $t$ such that  $y_tx_t, x_{t+1}y_{t+1} \in A(D)$. Then 
\[
Q[x_t,x_{t+1}]y_{t+1}y_{t+2}\dots y_ky_1 \dots y_tx_t
\]
is a Hamilton orcycle in $D$ with at most $5$ backwards arcs, with $2$ possible backward arcs in $Q[x_t,x_{t+1}]$ and $3$ in $y_{t+1}y_{t+2}\dots y_ky_1 \dots y_t$.

If $x_iy_i\in A(D)$ for all $i$, or $y_ix_i\in A(D)$ for all $i$, then 
\[
Q[x_k,x_1]y_1y_2\dots y_kx_k
\]
is a Hamilton orcycle in $D$ with at most 5 backwards arcs, with $2$ possible backward arcs in  $Q[x_k,x_1]$, $2$ in $y_1\dots y_k$, and one of $x_1y_1,y_kx_k$.

This concludes case 1.

\2

\textbf{Case 2:} There is some vertex $y \in Y$ such that $xy \notin A(D^*)$ for all $x \in X$. Let $Z$ contain all such vertices. Since $\alpha(D) \le 2$, 
$D[Z]$ must be a semicomplete digraph.
Let $W=Y \setminus Z$. Consider any vertex $w \in W$. As $\alpha(D)\le 2$ and there is no $(X, Z)$-arc, either $xw \in A(D^*)$ for all $x \in X$ or $zw \in A(D^*)$ for all $z \in Z$. The former case contradicts maximality of $X$ as $D[X \cup \{w\}]$ is a semicomplete digraph bigger than $X$. Hence, for all $w \in W$ and all $z \in Z$, we must have $zw \in A(D^*)$. Furthermore, for all $w \in W$, there is some $x \in X$ such that $xw \in A(D^*)$ (otherwise this $w$ would be in $Z$).

By Theorem \ref{PathPartition}, we can partition $W$ into 2 dipaths $u_1 \dots u_s$ and $v_1 \dots v_t$. By Theorem \ref{HamPathTour}, there exists a Hamilton dipath $P_z=p_1 \dots p_l$ in $Z$.

\begin{figure}
\begin{tikzpicture}[
    bigset/.style={draw, circle, minimum size=28mm},
    smallv/.style={draw, circle, minimum size=3mm, inner sep=0pt},
    Wbox/.style={draw, rounded corners=6pt, fit=#1, inner sep=10mm},
    >=Stealth
]

\node[bigset] (X) at (0,-3) {};
\node at (0,-5) {$X$};

\node[bigset] (Z) at (9.5,-3) {};
\node at (9.5,-5) {$Z$};

\node[smallv] (u1) at (3.2,1.4) {};
\node[smallv] (u2) at (4.0,1.4) {};
\node[smallv] (us) at (6.0,1.4) {};

\node at (3.2,0.8) {$u_1$};
\node at (4.0,0.8) {$u_2$};
\node at (6.0,0.8) {$u_s$};

\draw[->] (u1) -- (u2);
\draw[->] (u2) -- ($(u2)+(0.7,0)$);
\draw[->] ($(us)-(0.7,0)$) -- (us);

\node at (5.0,1.4) {$\cdots$};

\node[smallv] (v1) at (3.2,0.2) {};
\node[smallv] (v2) at (4.0,0.2) {};
\node[smallv] (vt) at (6.0,0.2) {};

\node at (3.2,-0.4) {$v_1$};
\node at (4.0,-0.4) {$v_2$};
\node at (6.0,-0.4) {$v_t$};

\draw[->] (v1) -- (v2);
\draw[->] (v2) -- ($(v2)+(0.7,0)$);
\draw[->] ($(vt)-(0.7,0)$) -- (vt);

\node at (5.0,0.2) {$\cdots$};

\node[Wbox={(u1)(us)(v1)(vt)}] (W) {};
\node at ($(W.south)+(0,-0.6)$) {$W$};

\def\rX{14mm}

\coordinate (Ztop) at ($(Z.center)+(\rX*cos 90,\rX*sin 90)$);
\coordinate (Zbot) at ($(Z.center)+(\rX*cos 180,\rX*sin 180)$);

\coordinate (Wtop) at (W.east);
\coordinate (Wbot) at ($(W.south)+(1,0)$);

\draw (Ztop) -- (Wtop);
\draw (Zbot) -- (Wbot);
\draw (Ztop) -- (Wbot);
\draw (Zbot) -- (Wtop);

\draw (X.north) -- (W.west);
\draw (X.east) -- ($(W.south)-(1,0)$);

\end{tikzpicture}
    \centering
    \caption{Case 2}
    \label{fig:placeholder}
\end{figure}

\2

{\bf Claim A:} {$v_tu_1,u_sv_1 \in A(D^*)$}

\2

{\bf Proof of Claim~A:}
$u_1 \dots u_sP_zv_1 \dots v_t$ is a Hamilton orpath in $D[Y]$ with at most $2$ backward arcs. If $v_tu_1 \notin A(D^*)$, then we get a contradiction by Lemma \ref{DipathPlusSemi} part~(a). Hence, $v_tu_1 \in A(D^*)$.
Similarly, we also have $u_sv_1 \in A(D^*)$.

\2

{\bf Claim B:} {The following conditions hold.
\begin{description}
\item[(1)] For all $x' \in N_X(u_s)$ and all $x \in N_X(v_1)\setminus \{x'\}$, we have $v_1x \in A(D)$.

\item[(2)] For all $x' \in N_X(v_1)$ and all $x \in N_X(u_s)\setminus \{x'\}$, we have $xu_s \in A(D)$.

\item[(3)] For all $x' \in N_X(v_t)$ and all $x \in N_X(u_1)\setminus \{x'\}$, we have $u_1x \in A(D)$.

\item[(4)] For all $x' \in N_X(u_1)$ and all $x \in N_X(v_t)\setminus \{x'\}$, we have $xv_t \in A(D)$.
\end{description}
}

\2
{\bf Proof of Claim~B:}
We first prove part~(1). 
Consider any $x, x'$ that satisfy the hypothesis. If $xv_1  \in A(D)$, then
\[
u_1 \dots u_sQ[x',x]v_1 \dots v_t p_1\dots p_lu_1,
\]
is a Hamilton orcycle in $D$ with at most $5$ backward arcs.
The possible backward arcs are $v_tp_1,p_lu_1,u_sx'$, and $2$ arcs from $Q[x',x]$.

Similarly, suppose $x, x'$ satisfy the hypothesis of part (2).
If $u_sx  \in A(D)$, then
\[
u_1 \dots u_sQ[x,x']v_1 \dots v_t p_1\dots p_lu_1,
\]
is a Hamilton orcycle in $D$ with at most $5$ backward arcs.
The possible backward arcs are $v_tp_1,p_lu_1,x'v_1$, and $2$ arcs from $Q[x',x]$.

Parts (3) and (4) can be proved analogously. This proved Claim~B.

\2

{\bf Claim C:} {$|X| \ge 5$ or $|Z|=1$.}

\2

{\bf Proof of Claim~C:}
Suppose that $|X|\le 4$. As any semicomplete digraph $W'$ in $W$ together with all vertices in $Z$ create a semicomplete digraph of size $|W'|+|Z|$, which must be at most $|X|\le 4$. So we must have $|W| \le R(3,5-|Z|)-1$. If $|Z| \ge 2$, then $|Z|+|W| \le 7$, so $|V(D)| \le |X|+|Z|+|W| \le 11$, which contradicts  $|V(D)| \ge 12$.
So we may assume that $|Z|=1$ as $|Z|=0$ is the \textbf{Case 1}.
This proves Claim~C.

\2
By Claim~A, we have a Hamilton orcycle $C_W=u_1 \dots u_sv_1 \dots v_tu_1$ in $D[W]$ with at most $2$ backward arcs. We may refer to $C_W$ as $C_W=w_1 \dots w_{s+t}w_1$, with $w_{s+t+1}$ means $w_1$. Note that $w_i=u_i$ for $i=1,2, \dots, s$.

{\bf Claim D:} {$|N_X(w)|\le 2$ for all $w \in W$. Furthermore, if $|N_X(w_j)|= 2$, then we may assume that  $|N_X(w_{j-1})| = |N_X(w_{j+1})|= 1$.}

\2

{\bf Proof of Claim~D:}
Suppose the claim is false. Without loss of generality, we may assume that there exists a vertex $u_i$ for some $i \in \{1,2,\dots,s\}$ which contradicts the assumption. So either $|N_X(w_i)|\ge 3$, or $|N_X(w_i)|= 2$ and $ |N_X(w_{i-1})| \ge 2$, or $|N_X(w_i)|= 2$ and $|N_X(w_{i+1})| \ge 2$.

If $|N_X(w_i)|\ge 3$, we can pick any $x_1 \in N_X(w_{i-1}), x_3 \in N_X(w_{i+1})$, and has $|N_X(w_i)\setminus \{x_1,x_3\}|\ge 1$. So we can pick $x_2 \in N_X(w)\setminus \{x_1,x_3\}$.
If $|N_X(w_i)|= 2$ and $|N_X(w_{i-1})| \ge 2$, pick $x_3 \in N_X(w_{i+1}), x_2 \in N_X(w_i)\setminus \{x_3\}, x_1 \in N_X(w_{i-1}) \setminus \{x_2\}$.
If $|N_X(w_i)|= 2$ and $|N_X(w_{i+1})| \ge 2$, pick $x_1 \in N_X(w_{i-1}), x_2 \in N_X(w_i)\setminus \{x_1\}, x_3 \in N_X(w_{i+1}) \setminus \{x_2\}$.

So, in all cases, we can pick $x_1 \in N_X(w_{i-1}), x_2 \in N_X(w_i), x_3 \in N_X(w_{i+1})$ such that $x_2 \notin \{ x_1,x_3 \}$. By Lemma \ref{lem: PathCombine}, we can combine $v_1 \dots v_t$ and $p_1 \dots p_l$ into one dipath $q_1q_2 \dots q_{t+l}$ , with $q_1= v_1$ or $p_1$, and $q_{t+l}=v_t$ or $p_l$. 
Since $p_1,p_l \in Z$, $u_1,u_s \in W$, and by Claim~A, we have $v_1u_s,p_1u_s,v_tu_1,p_lu_1 \in A(D^*)$.
So $q_1u_s,q_{t+l}u_1 \in A(D^*)$.
We have a Hamilton orcycle $C'=q_1q_2 \dots q_{t+l}u_1\dots u_s q_1$ in $D[Y]$ with at most $2$ backward arcs ($q_{t+l}u_1$ and $u_sq_1$).

\2

\textbf{Case D1:} $i \in \{2,\dots,s-1\}$. 
We have $w_{i-1}=u_{i-1}, w_i=u_i, w_{i+1}=u_{i+1}$. As $q_1q_2 \dots q_{t+l}u_1\dots u_s$ is a Hamilton orpath in $D[Y]$ with at most one backward arc, we get a contradiction by Lemma \ref{DipathPlusSemi} part~(c).

\2

\textbf{Case D2:} $i=s, s \ge 2$. We have $w_{i-1}=u_{s-1}, w_i=u_s, w_{i+1}=v_1$. By Claim~B part~(2), we have $x_2u_s \in A(D)$.
The orcycle
\[
C'[u_s,u_1]u_2 \dots u_{s-1} Q[x_1,x_2]u_s,
\]
is a Hamilton orcycle in $D$ with at most $5$ backward arcs. 
The possible backward arcs are $2$ in $C'[u_s,u_1]$, $2$ in $Q[x_1,x_2]$, and $u_{s-1}x_1$. Note that if $s=2$, the orcycle simply degenerated to $C'[u_2,u_1]Q[x_1,x_2]u_2$.

\2

\textbf{Case D3:} $i=1, s\ge 2$. We have $w_{i-1}=v_t, w_i=u_1, w_{i+1}=u_2$. By Claim~B part~(3), we have $u_1x_2 \in A(D)$. The orcycle 
\[
C'[u_s,u_1]Q[x_2,x_3]u_2 \dots u_s,
\]
is a Hamilton orcycle in $D$ with at most $5$ backward arcs.
The possible backward arcs are $2$ in $C'[u_s,u_1]$, $2$ in $Q[x_2,x_3]$, and $x_3u_2$. Note that if $s=2$, the orcycle simply degenerated to $C'[u_2,u_1]Q[x_2,x_3]$.

\2

\textbf{Case D4:} $i=s=1$. In this case, we have $w_{i-1}=v_t, w_i=u_1, w_{j+1}=v_1$. By Claim~B part~(2), we have $x_2u_1 \in A(D)$, but by Claim~B part~(3), we have $u_1x_2 \in A(D)$, a contradiction.
This proved Claim~D.

\2

Since $Y=Z \cup W$, $D[Z]$ is semicomplete, and $zw \in A(D^*)$ for all $w \in W, z \in Z$, if $D[W]$ is semicomplete, then $D[Y]$ is also semicomplete.
By Lemma \ref{lem: TwoSemiC}, we may assume that $D[W]$ is not semicomplete. So there exists $w \neq w' \in W$ such that $ww' \notin A(D^*)$. 

Since $\alpha(D) \le 2$, we have $N_X(w)\cup N_X(w')=X$. If $|X| \ge 5$, then either $|N_X(w)|\ge 3$ or $|N_X(w')|\ge 3$. Both contradict Claim~D. So by Claim~C, we must have $|Z|=1$ and $|X|=4$.

Since $|V(D)| \ge 12$, we have $|W| \ge 12-|X|-|Z| =7$. By Claim~D, every vertex in $W$ has degree at most $2$ in $D[X]$, and at least half of them must have degree $1$. Hence, at least $4$ vertices in $W$ have degree exactly $1$ in $X$. If two distinct vertices $w_1,w_2 \in W$ have degree $1$ in $X$, then there is some $x \in X \setminus (N_X(w_1) \cup N_X(w_2))$. Since $\alpha(D) \le 2$, we have $w_1w_2 \in A(D^*)$. 
Then we know that the vertex in $Z$ together with $4$ vertices with degree $1$ in $W$  induce a semicomplete digraph of size $5 > |X|$, which contradicts that $X$ is one of the largest semicomplete digraphs in $D$.
\end{proof}

\section{Open problems on Hamilton orpaths and orcycles in digraphs with independence number at least 2}\label{sec:OP}

Theorems \ref{mainHP} and \ref{mainHC} show that connected (2-connected, resp.) digraphs $D$ with $\alpha(D)=2$ have a Hamilton orpath (a Hamilton orcycle, resp.) with a constant number of backward arcs. Perhaps, these results can be extended to the case of $\alpha(D)=k\ge 3$. 

\begin{problem}\label{op1}
Let $D'$ ($D''$, resp.) be a digraph which has a Hamilton orpath (orcycle, resp.) and let
$b_P(D')$ ($b_C(D'')$, resp.) be the minimum number of backward arcs in a Hamilton orpath (orcycle, resp.) in $D'$ ($D''$, resp.). Are there functions $\beta_P(k)$ and $\beta_C(k)$ such that if $\alpha(D')=\alpha(D'')=k$ then $b_P(D')\le \beta_P(k)$ and $b_C(D'')\le \beta_C(k)$?
\end{problem} 

The following simple reduction shows that if there is a function $\beta_C(k)$, then there is a function $\beta_P(k)$ and 
 $\beta_P(k)<\beta_C(k)$. 
Indeed, let $\beta_C(k)$ exist for each $k$ and let  $D'$ be a digraph which has a Hamilton orpath and $\alpha(D')=k$. Add a new vertex $x$ to $D'$ together with all possible arcs from $x$ to $V(D')$; denote the resulting digraph $D^*.$ Observe that 
$\alpha(D^*)=k$, $D^*$ has a Hamilton orcycle $Z$ with at most $\beta_C(k)$ backward arcs. Hence, the Hamilton orpath $Z-x$ of $D'$ has at most $\beta_C(k)-1$ backward arcs. 

Hence, if the answer to Problem \ref{op1} is positive, then it is enough to consider the Hamilton orcycle subproblem. 
The next result establishes a lower bound on $\beta_C(k)$, if it exists.

\begin{theorem}\label{lowbound4k}
For every integer $k\ge 2$, there is a digraph $D_k$ with $\alpha(D_k)=k$ such that the number of backward arcs in every Hamilton orcycle of $D_k$ is at least $\lfloor 5k/2 \rfloor -1$.
\end{theorem}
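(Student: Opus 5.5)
We construct $D_k$ as a disjoint union of $k$ semicomplete blocks, each a transitive tournament, joined into a ring by single arcs; any Hamilton orcycle must then pay backward arcs inside blocks and on the connecting arcs. The count $\lfloor 5k/2\rfloor - 1$ suggests a per-block cost of $5/2$ on average, minus a small global saving. For $k=2$ this must give $4$, matching Theorem~\ref{mainHC}(ii).

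\textbf{Construction.} Take transitive tournaments $T_1,\dots,T_k$, each on at least $4$ vertices, say $T_i$ on $a^i_1,\dots,a^i_m$ with $a^i_ra^i_s$ an arc whenever $r<s$. Add two arcs between consecutive blocks, so that $UG(D_k)$ is 2-connected and the blocks form a cycle. Since every independent set meets each $T_i$ in at most one vertex, $\alpha(D_k)\le k$. Keeping the inter-block arcs sparse gives equality: choose one vertex per block that avoids these arcs, which is possible because $m\ge 4$. For $k=2$ the two connecting arcs join $T_1$ and $T_2$. The endpoints and directions are to be chosen so that entering or leaving a block is forced in the unfavourable direction: for example, connecting arcs go out of the sink $a^i_m$ or into the source $a^i_1$ of a block. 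This echoes the orpath example in Section~\ref{sec:HP}.

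\textbf{Lower bound.} Because the inter-block edges form a single cycle with exactly two edges per adjacent pair, a Hamilton orcycle $C$ uses every connecting edge, or in the $k=2$ case both edges. Hence $C\cap T_i$ is a Hamilton path of $T_i$ between two prescribed endpoints $u_i,v_i$, and the count of backward arcs splits as
\[
\sigma^-(C)=\sum_i b_{T_i}(u_i,v_i)+(\text{backward connecting arcs}).
\]
In a transitive tournament a Hamilton $(u,v)$-orpath must have a backward arc unless it starts at the source and ends at the sink. More precisely, $b(a_r,a_s)$ is at least the number of the conditions $r\neq 1$, $s\neq m$, $r>s$ that hold. This counting is elementary: the forward runs of the path are increasing sequences, and covering $a_1,\dots,a_m$ by runs, one of which starts at $a_r$, forces breaks. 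The endpoints are chosen so that each block either costs $2$ with its connecting arcs forward, or costs $\ge 3$ when a connecting arc is backward, with an alternating pattern that forces $5$ per two consecutive blocks. The parity of orientation around the ring then costs one global backward arc, and the cyclic closure saves one, which gives $\lfloor 5k/2\rfloor-1$.

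\textbf{Main obstacle.} The delicate step is choosing the directions and endpoints of the connecting arcs so that the per-block local bound really sums to $\lfloor 5k/2\rfloor-1$ for every combination of traversal directions, since $C$ may traverse each block in either direction. I would first settle $k=2$ and $k=3$ by exhaustive case analysis over the traversal directions, tabulating $b(u,v)$ and $b(v,u)$ for the chosen endpoint pairs. I would then generalise by grouping blocks in pairs, each pair contributing $\ge 5$, with the $-1$ arising from the closing of the cycle and, for odd $k$, the leftover block contributing $2$. If the single-cycle arrangement fails to force $5$ per pair, I would instead attach the pairs of blocks to a central gadget.
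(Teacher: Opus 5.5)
\textbf{There is a genuine gap.} The construction you propose cannot reach $\lfloor 5k/2\rfloor-1$ once $k\ge 4$, and the block-counting rule it rests on is false.

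\emph{The counting rule.} In a transitive tournament on $a_1,\dots,a_m$:
\begin{itemize}
\item $b(a_r,a_s)\le 1$ whenever $r<s$, witnessed by $a_r\dots a_{s-1}a_{s+1}\dots a_m a_1\dots a_{r-1}a_s$;
\item $b(a_r,a_s)\le 2$ whenever $r>s$, witnessed by $a_r\dots a_m a_{s+1}\dots a_{r-1}a_1\dots a_s$.
\end{itemize}
For example, in $TT_5$ the orpaths $v_2v_3v_5v_1v_4$ and $v_4v_5v_1v_3v_2$ have one and two backward arcs, while your rule predicts at least two and three.

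\emph{The structural obstruction is an averaging argument.} Assume every block is entered and left through single connecting arcs. Your claim that $C$ uses every connecting edge needs this when $k\ge 3$: with two edges per consecutive pair, $C$ could pass through each block twice. Under this assumption, a Hamilton orcycle is a choice of traversal direction plus a Hamilton path in each block between its two fixed endpoints $u_i,v_i$.
\begin{itemize}
\item Each connecting arc is backward in exactly one of the two directions.
\item Each semicomplete block satisfies $b(u_i,v_i)+b(v_i,u_i)\le 3$, by Lemma~\ref{lem: XYpathSemiC}.
\end{itemize}
So the best orcycles in the two directions have at most $3k+k=4k$ backward arcs together. One of them therefore has at most $2k<\lfloor 5k/2\rfloor-1$ for every $k\ge 4$. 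In particular, ``$5$ per pair of consecutive blocks'' is unattainable: a pair and its two arcs contribute at most $8$ over both directions. Attaching a block at its source and sink, as you suggest, is even worse, since it contributes only $0+2$. The ring idea works only for $k\le 3$; for $k=2$ it is exactly the paper's $D_2$.

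\textbf{What is missing} is a connector that costs one backward arc in \emph{both} directions, plus a way to add it without raising $\alpha$. The paper's construction is as follows.
\begin{itemize}
\item The blocks are $k$ copies of $TT_5$ whose designated vertices $x_i,y_i$ are the second and fourth vertices. There are $\lfloor k/2\rfloor$ copies with $b(x_i,y_i)=1$, $b(y_i,x_i)=2$, and the rest are mirrored.
\item For $i\le k-2$, $Q_i$ is joined to $Q_{i+1}$ through a new vertex $u_i$ with arcs $y_iu_i$ and $x_{i+1}u_i$, both pointing into $u_i$.
\item To keep $\alpha(D_k)=k$, the $u_i$ form a tournament $T$ completely joined to $Q_k$. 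Each $Q_i$ with $i\le k-1$ has only two external edges, so every Hamilton orcycle still uses exactly $y_iu_i$ and $x_{i+1}u_i$ at each $u_i$, and never an arc between $T$ and $Q_k$.
\item The last two links are the single arcs $x_1y_k$ and $y_{k-1}x_k$, oriented so that exactly one of them is backward in either direction.
\end{itemize}
Every Hamilton orcycle then has at least $\lfloor 3k/2\rfloor$ backward arcs inside the blocks, $k-2$ at the connectors, and $1$ on the direct arcs. Your ``central gadget'' fallback points in this direction, but it leaves unspecified exactly the mechanism that makes the proof work.
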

\begin{proof}
Let $Q=TT(v_1,v_2,v_3,v_4,v_5)$ be a (transitive) tournament on five vertices, $v_1,v_2,v_3,v_4,v_5$, such that $v_iv_j \in A(Q)$ if and only if $i<j$. Observe the minimum number of backward arcs in a Hamilton $(v_2,v_4)$-orpath is exactly one. Indeed, 
$v_2v_3v_5v_1v_4$ is a Hamilton $(v_2,v_4)$-orpath with one backward arc and $v_5$ must be incident with a backward arc in any Hamilton $(v_2,v_4)$-orpath. Also,  observe that the minimum number of backward arcs in a Hamilton $(v_4,v_2)$-orpath $Z$ is two. Indeed, for every Hamilton $(v_4,v_2)$-orpath in $Q$ there must be a backward arc on the subpath from $v_4$ to $v_3$ and also a backward arc on the subpath from $v_3$ to $v_2$.
 Finally, observe that $v_4v_5v_1v_3v_2$ is a Hamilton $(v_4,v_2)$-orpath with two backward arcs.

We now construct a digraph $D_k$ with $\alpha(D_k)=k$ as follows ($k \geq 2$); see Figure \ref{figD6} for $D_6.$
Start with $k$ copies of $Q$, named $Q_1,Q_2,\ldots ,Q_k$. Let $x_i,y_i \in V(Q_i)$ be chosen as follows.
\begin{itemize}
\item If $i \leq \frac{k}{2}$, let $Q_i=TT(u,x_i,v,y_i,w)$; 
\item If $i > \frac{k}{2}$,   let $Q_i=TT(w,y_i,v,x_i,u)$. 
\end{itemize}
Then add any tournament $T$ on $k-2$ vertices $u_1,u_2,\ldots,u_{k-2}$ and add any arcs between $T$ and $Q_k$ such that $V(Q_k) \cup V(T)$ induces a tournament in $D_k$. Finally add the following arcs.
\begin{description}
\item[(a)] $y_i u_i$, $x_{i+1} u_i$ for all $i = 1,2, \ldots , k-2$;
\item[(b)] $x_1 y_k$ and $y_{k-1} x_k$.
\end{description}
This completes the construction of $D_k$. Note that $D_k$ can be decomposed into $k$ tournaments, $Q_1, Q_2, \ldots, Q_{k-1}, D_k[V(Q_k) \cup V(T)]$,
which implies that $\alpha(D_k)=k$.

We will now show that any Hamilton orcycle in $D_k$ has at least $\lfloor 5k/2 \rfloor -1$ backward arcs.
Let $C$ be a Hamilton orcycle in $D_k$ which has the minimum number of backward arcs. As $C$ has to enter and leave $Q_i$ for all $i \in \{1,2,\ldots,k-1\}$ we note
that all arcs defined in (a) and (b) above belong to $C$ (some as forward arcs and some as backward arcs). This means that exactly one of the following two cases holds:

\begin{description}
\item[Case 1:] $C$ contains a Hamilton $(x_i,y_i)$-orpath in $Q_i$ for all $i \in \{1,2,\ldots,k-1\}$;

\item[Case 2:]  $C$ contains a Hamilton $(y_i,x_i)$-orpath in $Q_i$ for all $i \in \{1,2,\ldots,k-1\}$.

\end{description}

For $i=1,2,\dots ,k-2$,  each $u_i$ is incident with exactly two arcs in $C$, which must be  $y_i u_i$ and $x_{i+1} u_i$. Hence, no arcs between $T$ and $Q_k$ belong to $C$. Therefore, if Case 1 holds, then $C$ contains
a Hamilton $(x_k,y_k)$-orpath in $Q_k$; otherwise, Case 2 holds and $C$ contains a Hamilton $(y_k,x_k)$-orpath in $Q_k$. Whatever Case 1 or Case 2 holds, these Hamilton orpaths in all $Q_i$'s which are contained in $C$, has at least a total of $2\lfloor k/2 \rfloor + \lceil k/2 \rceil=\lfloor 3k/2 \rfloor$ backward arcs.
However, half the arcs defined in (a) and (b) above will also be backward arcs in $C$, so the total number of backward arcs in $C$ is at least $\lfloor 3k/2 \rfloor + k-1 = \lfloor 5k/2 \rfloor -1$.
\end{proof}

Note that when $k=2$ the above gives an example where every Hamilton orcycle has at least four backward arcs.

\begin{figure}[htbp]
 \centering  
\begin{tikzpicture}[node distance=1.5cm,scale=0.9, 
    smallset/.style={ellipse, draw, minimum height=1.2cm, minimum width=3.5cm},
    medset/.style={ellipse, draw, minimum height=1.2cm, minimum width=2.5cm}]

    \draw (0,0) node[fill=black,minimum size =2pt,circle,inner sep=1pt] (v11) {};
    \draw (0.5,0) node[fill=black,minimum size =2pt,circle,inner sep=1pt,label=above:$x_1$] (v12) {};
    \draw (1,0) node[fill=black,minimum size =2pt,circle,inner sep=1pt] (v13) {};    
    \draw (1.5,0) node[fill=black,minimum size =2pt,circle,inner sep=1pt,label=above:$y_1$] (v14) {};
    \draw (2,0) node[fill=black,minimum size =2pt,circle,inner sep=1pt] (v15) {};

    \draw[arrows={-Stealth[reversed, reversed]}] (v11) -- (v12);
    \draw[arrows={-Stealth[reversed, reversed]}] (v12) -- (v13);
    \draw[arrows={-Stealth[reversed, reversed]}] (v13) -- (v14);
    \draw[arrows={-Stealth[reversed, reversed]}] (v14) -- (v15);

    \node[medset] (Q1) at (1,0) {};
    \node at (1,1) {\large $Q_1$};

    \draw (4,0) node[fill=black,minimum size =2pt,circle,inner sep=1pt] (v21) {};
    \draw (4.5,0) node[fill=black,minimum size =2pt,circle,inner sep=1pt,label=above:$x_2$] (v22) {};
    \draw (5,0) node[fill=black,minimum size =2pt,circle,inner sep=1pt] (v23) {};    
    \draw (5.5,0) node[fill=black,minimum size =2pt,circle,inner sep=1pt,label=above:$y_2$] (v24) {};
    \draw (6,0) node[fill=black,minimum size =2pt,circle,inner sep=1pt] (v25) {};

    \draw[arrows={-Stealth[reversed, reversed]}] (v21) -- (v22);
    \draw[arrows={-Stealth[reversed, reversed]}] (v22) -- (v23);
    \draw[arrows={-Stealth[reversed, reversed]}] (v23) -- (v24);
    \draw[arrows={-Stealth[reversed, reversed]}] (v24) -- (v25);

    \node[medset] (Q2) at (5,0) {};
    \node at (5,1) {\large $Q_2$};

    \draw (8,0) node[fill=black,minimum size =2pt,circle,inner sep=1pt] (v31) {};
    \draw (8.5,0) node[fill=black,minimum size =2pt,circle,inner sep=1pt,label=above:$x_3$] (v32) {};
    \draw (9,0) node[fill=black,minimum size =2pt,circle,inner sep=1pt] (v33) {};    
    \draw (9.5,0) node[fill=black,minimum size =2pt,circle,inner sep=1pt,label=above:$y_3$] (v34) {};
    \draw (10,0) node[fill=black,minimum size =2pt,circle,inner sep=1pt] (v35) {};

    \draw[arrows={-Stealth[reversed, reversed]}] (v31) -- (v32);
    \draw[arrows={-Stealth[reversed, reversed]}] (v32) -- (v33);
    \draw[arrows={-Stealth[reversed, reversed]}] (v33) -- (v34);
    \draw[arrows={-Stealth[reversed, reversed]}] (v34) -- (v35);

    \node[medset] (Q3) at (9,0) {};
    \node at (9,1) {\large $Q_3$};

    \draw (3.5,-2) node[fill=black,minimum size =2pt,circle,inner sep=1pt,label=below:$u_1$] (u1) {};
    \draw (4.5,-2) node[fill=black,minimum size =2pt,circle,inner sep=1pt,label=below:$u_2$] (u2) {};
    \draw (5.5,-2) node[fill=black,minimum size =2pt,circle,inner sep=1pt,label=below:$u_3$] (u3) {};    
    \draw (6.5,-2) node[fill=black,minimum size =2pt,circle,inner sep=1pt,label=below:$u_4$] (u4) {};

    \draw (u1) -- (u2);
    \draw (u2) -- (u3);
    \draw (u3) -- (u4);

    \node[smallset] (T) at (5,-2) {};
    \node at (2.5,-2) {\large $T$};

    \draw (0,-4) node[fill=black,minimum size =2pt,circle,inner sep=1pt] (v61) {};
    \draw (0.5,-4) node[fill=black,minimum size =2pt,circle,inner sep=1pt,label=below:$y_6$] (v62) {};
    \draw (1,-4) node[fill=black,minimum size =2pt,circle,inner sep=1pt] (v63) {};    
    \draw (1.5,-4) node[fill=black,minimum size =2pt,circle,inner sep=1pt,label=below:$x_6$] (v64) {};
    \draw (2,-4) node[fill=black,minimum size =2pt,circle,inner sep=1pt] (v65) {};

    \draw[arrows={-Stealth[reversed, reversed]}] (v61) -- (v62);
    \draw[arrows={-Stealth[reversed, reversed]}] (v62) -- (v63);
    \draw[arrows={-Stealth[reversed, reversed]}] (v63) -- (v64);
    \draw[arrows={-Stealth[reversed, reversed]}] (v64) -- (v65);

    \node[medset] (Q6) at (1,-4) {};
    \node at (1,-5) {\large $Q_6$};

    \draw (4,-4) node[fill=black,minimum size =2pt,circle,inner sep=1pt] (v51) {};
    \draw (4.5,-4) node[fill=black,minimum size =2pt,circle,inner sep=1pt,label=below:$y_5$] (v52) {};
    \draw (5,-4) node[fill=black,minimum size =2pt,circle,inner sep=1pt] (v53) {};    
    \draw (5.5,-4) node[fill=black,minimum size =2pt,circle,inner sep=1pt,label=below:$x_5$] (v54) {};
    \draw (6,-4) node[fill=black,minimum size =2pt,circle,inner sep=1pt] (v55) {};

    \draw[arrows={-Stealth[reversed, reversed]}] (v51) -- (v52);
    \draw[arrows={-Stealth[reversed, reversed]}] (v52) -- (v53);
    \draw[arrows={-Stealth[reversed, reversed]}] (v53) -- (v54);
    \draw[arrows={-Stealth[reversed, reversed]}] (v54) -- (v55);

    \node[medset] (Q5) at (5,-4) {};
    \node at (5,-5) {\large $Q_5$};

    \draw (8,-4) node[fill=black,minimum size =2pt,circle,inner sep=1pt] (v41) {};
    \draw (8.5,-4) node[fill=black,minimum size =2pt,circle,inner sep=1pt,label=below:$y_4$] (v42) {};
    \draw (9,-4) node[fill=black,minimum size =2pt,circle,inner sep=1pt] (v43) {};    
    \draw (9.5,-4) node[fill=black,minimum size =2pt,circle,inner sep=1pt,label=below:$x_4$] (v44) {};
    \draw (10,-4) node[fill=black,minimum size =2pt,circle,inner sep=1pt] (v45) {};

    \draw[arrows={-Stealth[reversed, reversed]}] (v41) -- (v42);
    \draw[arrows={-Stealth[reversed, reversed]}] (v42) -- (v43);
    \draw[arrows={-Stealth[reversed, reversed]}] (v43) -- (v44);
    \draw[arrows={-Stealth[reversed, reversed]}] (v44) -- (v45);

    \node[medset] (Q3) at (9,-4) {};
    \node at (9,-5) {\large $Q_4$};

    \draw[arrows={-Stealth[reversed, reversed]}] (v14) -- (u1);
    \draw[arrows={-Stealth[reversed, reversed]}] (v22) -- (u1);
    \draw[arrows={-Stealth[reversed, reversed]}] (v24) -- (u2);
    \draw[arrows={-Stealth[reversed, reversed]}] (v32) -- (u2);
    \draw[arrows={-Stealth[reversed, reversed]}] (v34) -- (u3);
    \draw[arrows={-Stealth[reversed, reversed]}] (v44) -- (u3);
    \draw[arrows={-Stealth[reversed, reversed]}] (v42) -- (u4);
    \draw[arrows={-Stealth[reversed, reversed]}] (v54) -- (u4);
    \draw[arrows={-Stealth[reversed, reversed]}] (v12) -- (v62);
    \draw[arrows={-Stealth[reversed, reversed]}] (v52) to[out=140, in=40]  (v64);

    \draw[dashed] (0,-3.5) -- (3.5,-2.5);
    \draw[dashed] (1,-3.5) -- (4.5,-2.5);
    \draw[dashed] (1.5,-3.5) -- (5.5,-2.5);
    \draw[dashed] (2,-3.5) -- (6,-2.5);

\end{tikzpicture}
\caption{An example when $k=6$. The dashed lines indicate  $T$ and $Q_6$ are fully connected.}
\label{figD6}
\end{figure}
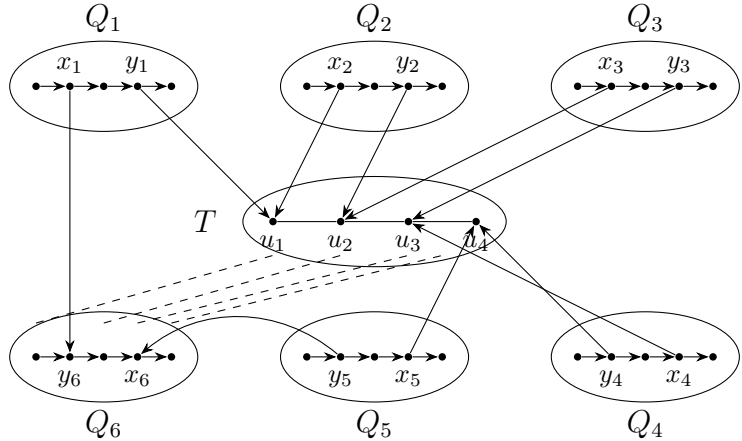

Recall that the following problem remains open.

\begin{problem}
Is it true that every 2-connected digraph $D$ with $\alpha(D)\le 2$ has a Hamilton orcycle with at most four backward arcs?
\end{problem}

By theorems of R{\'e}dei and Camion stated in Section \ref{sec:intro}, in polynomial time, we can decide the minimum number of backward arcs in a Hamilton orcycle (orpath, respectively) of a semicomplete digraph. 
We do not know the computational complexity of these problems for digraphs $D$ with $\alpha(D)\le 2$.

\begin{problem}
Let $D$ be a connected digraph with $\alpha(D)\le 2$. What is the complexity of deciding what is the minimum number of backward arcs in a Hamilton orpath of $D$? 
\end{problem}

\begin{problem}
Let $D$ be a 2-connected digraph with $\alpha(D)\le 2$. What is the complexity of deciding what is the minimum number of backward arcs in a Hamilton orcycle of $D$? 
\end{problem}

\end{document}